 \newcommand{\mylabel}[2]{#2\def\@currentlabel{#2}\label{#1}}
 \renewcommand\GL{\operatorname{GL}}
 \renewcommand\SL{\operatorname{SL}}
 \renewcommand\SO{\operatorname{SO}}
 \newcommand\Stab{\operatorname{Stab}}
 \newcommand\PGL{\operatorname{PGL}}
\newtheorem{theorem}{Theorem}[section]
\newtheorem{lemma}[theorem]{Lemma}
\newtheorem{cor}[theorem]{Corollary}
\newtheorem{prop}[theorem]{Proposition}
\theoremstyle{defn}
\newtheorem{defn}[theorem]{Definition}
\newtheorem{example}[theorem]{Example}
\theoremstyle{remark}
\newtheorem{remark}[theorem]{Remark}
\newtheorem{problem}[theorem]{Problem}
\numberwithin{equation}{section}
\title[On uniqueness of submaximally symmetric parabolic geometries]
      {On uniqueness of submaximally symmetric\\ parabolic geometries}
\author[D.~The]{Dennis~The}
\date{\today}
\address{Department of Mathematics \& Statistics, UiT The Arctic University of Norway, N-9037, Troms\o, Norway}
\email{dennis.the@uit.no}
\subjclass[2010]{Primary 58J70, Secondary 53B99, 22E46, 17B70}
\keywords{Submaximal symmetry, parabolic geometry, harmonic curvature, Tanaka theory}
\begin{document}
 \maketitle
 \thispagestyle{empty}

 \begin{abstract} 
 Among (regular, normal) parabolic geometries of type $(G,P)$, there is a locally unique maximally symmetric structure and it has symmetry dimension $\dim(G)$.  The symmetry gap problem concerns the determination of the next realizable (submaximal) symmetry dimension.  When $G$ is a complex or split-real simple Lie group of rank at least three or when $(G,P) = (G_2,P_2)$, we establish a local uniqueness result for submaximally symmetric structures of type $(G,P)$.
 \end{abstract}

 \section{Introduction}
 \label{S:intro}
 
 For a given (local) differential geometric structure, our interest here will be on the dimension of its Lie algebra of infinitesimal symmetries.  Many types of structures (e.g.\ Riemannian metrics on manifolds of fixed dimension) admit a finite maximal symmetry dimension $\fM$, and there is broad interest to (locally) classify all such maximally symmetric structures.  Letting $\fS$ denote the next possible realizable ({\sl submaximal}) symmetry dimension, there is often a significant gap arising between $\fM$ and $\fS$.  The {\sl symmetry gap problem} refers to the determination of $\fS$ and in doing so the task of exhibiting (local) models realizing this submaximal symmetry dimension.  With this goal in mind, one can make a detailed case-by-case study of the PDE determining the symmetry vector fields for a given structure, but in many situations such a direct investigation using analytic tools becomes cumbersome.  Our approach here is to draw upon strong algebraic tools that are present for an important broad class of structures that admit an equivalent reformulation as Cartan geometries.
 
 Parabolic geometries \cite{CS2009} admit such a reformulation -- they are a diverse and interesting class of geometries whose underlying structures include conformal, projective, CR, 2nd order ODE systems, and many classes of generic distributions, e.g.\ $(2,3,5)$-distributions.  Their description as parabolic geometries (see \S \ref{S:parabolic}) gives a solution to the equivalence problem for such structures in the sense of \'Elie Cartan.  Briefly, such a geometric structure on $M$ (henceforth, always assumed {\em connected}) admits a categorically equivalent description as a (regular, normal) Cartan geometry $(\cG \to M, \omega)$ of type $(G,P)$, where $G$ is a semisimple Lie group and $P$ is a parabolic subgroup.  (For more details on the passage from $M$ to the ``upstairs'' Cartan perspective, we refer the reader to \cite{CS2009,Cap2017}.)  The Cartan connection $\omega$ provides a canonical coframing on $\cG$ and its symmetry algebra $\inf(\cG,\omega)$ is isomorphic to the symmetry algebra of the underlying structure on $M$.  We have $\fM = \dim(G)$ for such structures, and there is a (locally) unique maximally symmetric model, namely the {\sl flat model} $(G\to G/P, \omega_G)$ of type $(G,P)$, where $\omega_G$ is the Maurer--Cartan form of $G$.  Any Cartan geometry of type $(G,P)$ can be viewed as a curved version of this flat model, and our starting point is to take the (normalized) Cartan geometry as the basic input to the problem.

 Substantial progress was made on the symmetry gap problem for parabolic geometries in \cite{KT2017}.  In that joint work with Kruglikov, we proved that $\fS \leq \fU$ for any $(G,P)$ in terms of a universal (algebraically-defined) upper bound $\fU$.  Moreover, when $G$ is {\em complex} or {\em split-real} simple:
 \begin{enumerate}
 \item[(i)] $\fU$ can be efficiently calculated via Dynkin diagram combinatorics, and 
 \item[(ii)] $\fS = \fU$ almost always, with some exceptions when $\rnk(G) = 2$.
 \end{enumerate}
 We uniformly proved $\fS = \fU$ by exhibiting a particular {\em homogeneous} structure, encoded ``{\sl Cartan-theoretically}'' by what we refer to here as an {\sl algebraic model $(\ff;\fg,\fp)$} (see \S \ref{S:alg-models}).  We remark that for more general real forms, the determination of $\fU$ and sharpness of $\fS \leq \fU$ is still largely open, although numerous interesting cases have been resolved -- see for example \cite{DT2014, Kru2016, KMT2016, KWZ2018}.

 Not addressed in \cite{KT2017} was the broader classification problem for submaximally symmetric structures, and our goal in this article is to resolve this.   In order to formulate our main result, we briefly recall some notions here.  (Precise definitions will be given later.)  For any (regular, normal) parabolic geometry, there is a fundamental quantity called {\sl harmonic curvature} $\kappa_H : \cG \to H_2(\fp_+,\fg)^1$, which completely obstructs local equivalence to the flat model.  The codomain of $\kappa_H$ is a filtrand of a certain Lie algebra homology group, which is a completely reducible $P$-representation, so only the action on it by the (reductive) Levi factor $G_0 \subsetneq P$ is relevant.  Consider a $G_0$-irrep $\bbV \subseteq H_2(\fp_+,\fg)^1$. We say that $(\cG \to M, \omega)$ is of type $(G,P,\bbV)$ if it is of type $(G,P)$ and $\im(\kappa_H) \subseteq \bbV$, and let $\fS_\bbV$ be the maximal symmetry dimension among regular, normal parabolic geometries of type $(G,P,\bbV)$ with $\kappa_H \not\equiv 0$.  We can now formulate our main result\footnote{See \S \ref{S:can-submax} for our subscript notation for parabolics in the complex or split-real setting.}:
 
 \begin{theorem} \label{T:main}
 Let $G$ be a complex or split-real {\em simple} Lie group, $P \subsetneq G$ a parabolic subgroup, and $G_0$ its Levi factor.  Let $(\cG \to M, \omega)$ be a regular, normal parabolic geometry of type $(G,P,\bbV)$, where $\bbV \subseteq H_2(\fp_+,\fg)^1$ is a $G_0$-irrep.  Suppose that $\dim(\inf(\cG,\omega)) = \fS_\bbV$, and $\rnk(G) \geq 3$ or $(G,P) = (G_2,P_2)$.  Then the geometry is locally homogeneous about any $u \in \cG$ with $\kappa_H(u) \neq 0$.  The corresponding algebraic model $(\ff;\fg,\fp)$ with $\dim\, \ff = \fS_\bbV$ is (up to $P$-equivalences $\ff \mapsto \Ad_p \ff$, $\forall p \in P$):
 \begin{enumerate}
 \item complex case: unique.
 \item split-real case: one of at most two possibilities.  Uniqueness holds if and only if there exists $g_0 \in G_0$ such that $g_0 \cdot \phi_0 = -\phi_0$, where $\phi_0 \in \bbV$ is a lowest weight vector.
 \end{enumerate}
 \end{theorem}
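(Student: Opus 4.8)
The plan is to upgrade the symmetry bound of \cite{KT2017} to local homogeneity, pin down the harmonic curvature value, prove a rigidity statement for the resulting algebraic model, and treat the real-form count last; set $\mathfrak{s}=\inf(\cG,\omega)$. For local homogeneity, fix $u\in\cG$ with $\phi:=\kappa_H(u)\neq 0$ and inject $\mathfrak{s}\hookrightarrow\fg$ by $\xi\mapsto\omega_u(\xi)$. Since symmetries preserve $\kappa_H$, the associated graded of the image lies in the Tanaka prolongation $\mathfrak{a}^\phi$ of $(\fg_-,\mathrm{ann}_{\fg_0}(\phi))$, the object underlying $\fU$ in \cite{KT2017}. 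With $\fU_\bbV:=\max_{0\neq\phi\in\bbV}\dim\mathfrak{a}^\phi$ and sharpness $\fS_\bbV=\fU_\bbV$ in force — secured by $\rnk(G)\geq 3$, which avoids the rank-two degeneracies of \cite{KT2017} — the chain $\fS_\bbV=\dim\mathfrak{s}\leq\dim\mathfrak{a}^\phi\leq\fU_\bbV=\fS_\bbV$ collapses to equalities. Hence $\operatorname{gr}(\omega_u(\mathfrak{s}))=\mathfrak{a}^\phi\supseteq\fg_-$, the image surjects onto $\fg/\fp$, and the symmetries are locally transitive on $M$; this extracts an algebraic model $(\ff;\fg,\fp)$ with $\operatorname{gr}\ff\cong\mathfrak{a}^\phi$.

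Next I would identify $\phi$. Maximality of $\dim\mathfrak{a}^\phi$ confines $\phi$ to the locus where $\mathrm{ann}_{\fg_0}(\phi)$ — and hence the prolongation — jumps to top dimension, which by the extremal analysis of \cite{KT2017} is the orbit of a lowest weight vector $\phi_0\in\bbV$; the rank hypothesis should moreover guarantee that this locus is a \emph{single} $G_0$-orbit, its failure being exactly the rank-two phenomenon. Acting by $G_0\subset P$, I normalise $\phi=\phi_0$, so that $\operatorname{gr}\ff=\mathfrak{a}^{\phi_0}$ and $\kappa_H\equiv\phi_0$ is constant.

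The decisive and hardest step is rigidity: with $\operatorname{gr}\ff=\mathfrak{a}^{\phi_0}$ and $\kappa_H=\phi_0$ fixed, the filtered model should be unique. Although $\mathfrak{a}^{\phi_0}$ is itself a graded subalgebra of $\fg$ (so its undeformed embedding is flat), the submaximal model is a nontrivial filtered deformation whose lowest-homogeneity curvature is $\phi_0$. I would reconstruct the full Cartan curvature $\kappa$ by homogeneity degree: the lowest degree is forced to equal $\phi_0$, and each higher part is pinned down by normality $\partial^*\kappa=0$, the Bianchi identity, invariance under the isotropy $\mathrm{ann}_{\fg_0}(\phi_0)$, and Kostant's description of $H_2(\fp_+,\fg)$. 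The hypothesis $\rnk(G)\geq 3$, together with the exceptional $(G_2,P_2)$, is precisely what I expect to make the governing obstruction and deformation spaces vanish, yielding a unique $\kappa$ and hence a unique model over $\mathbb{C}$ — this is case~(1).

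Finally, the real-form count. Over $\mathbb{C}$ the lowest weight line is one-dimensional and $G_0$ acts transitively on its nonzero vectors, so $\phi_0$ is unique up to $P$-equivalence. In the split-real case the grading element scales $\phi_0$ only by positive reals, so the $G_0$-orbits meeting the lowest weight line are $\mathbb{R}_{>0}\,\phi_0$ and $\mathbb{R}_{<0}\,\phi_0$, which coincide exactly when some $g_0\in G_0$ satisfies $g_0\cdot\phi_0=-\phi_0$. By the rigidity above each orbit determines a single model, so there are at most two, equivalent precisely under the stated sign-reversal criterion — this is case~(2).
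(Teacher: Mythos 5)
Your overall skeleton --- homogeneity from the collapsing chain $\fS_\bbV=\dim\fs\leq\dim\fa^{\kappa_H(u)}\leq\fU_\cO=\fS_\bbV$, normalization of $[\kappa_H]$ into the $G_0$-orbit of $[\phi_0]$, a rigidity step, and the $\pm\phi_0$ count over $\bbR$ --- is the paper's. (One small correction: the fact that $\dim\fa^\phi$ is maximized exactly on the $G_0$-orbit of $[\phi_0]$ in $\bbP(\bbV)$ is \cite[Prop.~3.1.1]{KT2017} and holds in every rank; it is not where $\rnk(G)\geq 3$ enters.) The genuine gap is in what you yourself call the decisive step. First, an algebraic model consists of a \emph{filtered subspace} $\ff\subset\fg$ together with a deformed bracket, so uniqueness requires normalizing the position of $\ff$ inside $\fg$ --- i.e.\ killing the positive-degree deformation map $\fd:\fs\to\fs^\perp$ --- and not only reconstructing $\kappa$. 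Your proposal never addresses this. The paper's Steps 1--3 do exactly that: using a generic $H_0\in\ker(\mu)$ with $\alpha(H_0)\neq 0$ for all $\alpha\in\Delta^+$ (Lemma \ref{L:mu}\ref{mu2}), the $P_+$-action normalizes $H_0\in\ff^0$; the twistor reduction to a minimal parabolic gives $\ff^1=0$, which then forces all of $\ker(\mu)$ into $\ff^0$ without tails; and Lemma \ref{L:def-T-inv} together with the opposite-sign property \ref{mu1} of $\mu$ forces $\fd=0$, so $\ff=\fa^{\phi_0}$ as subspaces of $\fg$.

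Second, the mechanism you invoke for pinning down $\kappa$ --- vanishing of higher obstruction and deformation spaces via normality, Bianchi and Kostant --- is not what makes the argument work and is not obviously true: a priori $\kappa$ ranges over all of $\ker(\partial^*)_+$, whose higher-degree part is large and is not controlled by $H_2(\fp_+,\fg)$ alone. What actually kills the higher components is a torus-weight argument that only becomes available \emph{after} the normalization above: since $\ker(\mu)\subset\ff^0$ and $\ff^0\cdot\kappa=0$, the curvature is concentrated in weights $\sigma=r\mu=\alpha+\beta+\gamma$ with $\alpha,\beta\in\Delta(\fp_+)$, $\gamma\in\Delta\cup\{0\}$, and the bound $-\lambda\leq\gamma$ together with the fact that $\mu\equiv-\lambda \bmod \{\alpha_j,\alpha_k\}$ has strictly negative coefficients on every other simple root (this is precisely where $\rnk(G)\geq 3$ enters, via \ref{mu1}, not through a cohomology vanishing) forces $r=1$; multiplicity one of the weight $\mu$ then gives $\kappa\in\bbC^\times\phi_0$. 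Note also that $(G_2,P_2)$ must be handled by a separate weight computation, since \ref{mu1} fails at rank two. Your real-form count at the end is correct as stated, but it rests entirely on the rigidity you have not supplied.
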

 
 Our result is {\em constructive} (see \S\ref{S:gen-pf}): over $\bbC$, the distinguished algebraic model $(\ff;\fg,\fp)$ encoding the corresponding submaximally symmetric geometry is what we refer to here as the {\sl canonical curved model of type $(\fg,\fp,\bbV)$}, which has curvature $\kappa = \phi_0$ (interpreted as a harmonic 2-cochain).  The Lie algebra $\ff$ arises as a {\sl filtered deformation} of a graded subalgebra $\fa := \fa^{\phi_0} \subseteq \fg$ (see \S\ref{S:sym-pr}), namely $\ff = \fa$ as vector subspaces, but with bracket $[\cdot,\cdot]_\ff := [\cdot,\cdot] - \kappa(\cdot,\cdot)$, where $[\cdot,\cdot]$ is the bracket on $\fg$ (restricted to $\fa$).  This is the same abstract model used in \cite{KT2017}.  In the split-real setting, the second possibility is $\ff = \fa$ with $\kappa = -\phi_0$.
 
 For fixed $(G,P)$, Theorem \ref{T:main} can be used to deduce the analogous classification of all submaximally symmetric structures, i.e.\ $\kappa_H$ is not constrained to a specific $\bbV$.  See \S \ref{S:examples} for some examples.
 
 We now give numerous examples illustrating that one cannot in general weaken the hypotheses of Theorem \ref{T:main} and expect such a uniform conclusion.
 
 Non-uniqueness over $\bbC$ can occur if we do not require $\bbV \subseteq H_2(\fp_+,\fg)^1$ to be $G_0$-irreducible:
 
 \begin{example} \label{X:LC}
 A Legendrian contact geometry (over $\bbF = \bbR$ or $\bbC$) is a contact manifold $(M^{2n+1},\cC)$ with contact distribution endowed with a splitting $\cC = \cE \op \cF$ into Legendrian subbundles.  (Second order ODE is the $n=1$ case.)  Such a structure underlies a parabolic geometry of type $(\SL(n+2,\bbF),P_{1,n+1})$, $\fg_0 \cong \bbF^2 \times \fsl(n,\bbF)$, and for $n \geq 2$ we have an $\fg_0$-irreducible decomposition $H_2(\fp_+,\fg)^1 \cong \bbT_1 \op \bbT_2 \op \bbW$.
From \cite[Table 11]{KT2017}, we have $\fS_{\bbT_1} = \fS_{\bbT_2} = \fS_\bbW = n^2 + 4$.  The corresponding canonical curved models are inequivalent.
 \end{example}
 
 If one weakens the complex / split-real assumptions, varying phenomena can occur:
 
 \begin{example}
 Real hypersurfaces in $\bbC^3$ having {\bf positive-definite} Levi form yield 5D (integrable) CR geometries, which are specific real forms of complex Legendrian contact geometries (Example \ref{X:LC}) when $n=2$.  They underlie regular, normal parabolic geometries of type $(G,P_{1,3})$, where $G = \operatorname{SU}(1,3)$ (not split-real), and the complexification of $\kappa_H$ would take values only in $\bbW \otimes_\bbR \bbC$.  We have $\fM = 15$, while it is known that $\fS = \fU = 7$, with infinitely many inequivalent submaximally symmetric models; see \cite[Table 8 (D.7)]{DMT2021}.  In the Levi-indefinite case, $G = \operatorname{SU}(2,2)$ (again, not split-real), $\fM = 15$, and there is a unique local model realizing $\fS = \fU = 8$; see \cite[Table 7 (N.8)]{DMT2021}.
 \end{example}
 
 Now suppose $\rnk(G) = 2$.  In contrast to local uniqueness in the $(G_2,P_2)$ case (both over $\bbC$ and $\bbR$, see \S\ref{S:G2}), there is a 1-parameter family of submaximally symmetric models in the $(G_2,P_1)$ case:
 
 \begin{example} \label{E:235} A $(2,3,5)$-geometry is a 5-manifold $M$ equipped with a rank 2 distribution $\cD$ having generic growth under the Lie bracket, i.e.\ $\rnk([\cD,\cD]) = 3$ and $\rnk([\cD,[\cD,\cD]]) = 5$.  Locally, any such $\cD$ admits a Monge normal form: there exist local coordinates $(x,y,p,q,z)$ and a function $f = f(x,y,p,q,z)$ with $f_{qq} \neq 0$ such that $\cD$ is spanned by the vector fields
 \[
 \partial_q, \quad \partial_x + p\partial_y + q\partial_p + f\partial_z.
 \]
 Such a structure underlies a parabolic geometry of type $(G_2,P_1)$, so $\fM = 14$, with $f = q^2$ realizing maximal symmetry.  Here, $\fS = \fU = 7$ in either the complex or real case.  Over $\bbC$, a well-known list of submaximally symmetric models is given by $f = q^m$ (for $m \neq -1, 0, \frac{1}{3}, \frac{2}{3}, 1, 2$) and $f = \log(q)$.
 \end{example}
 
  Other rank two cases include 3-dimensional conformal geometry, i.e.\ type $(B_2, P_1)$, and the contact geometry of scalar 3rd order ODE, i.e.\ type $(B_2, P_{1,2})$.  Submaximally symmetric models are non-unique for both -- in the former case see the classification in \cite{Kru1954}, while in the latter case they are given by $y''' + ky' + y = 0$, where $k$ is constant.  The rank two case of 2nd order ODE exhibits several exceptional phenomena:
 
 \begin{example}\label{X:2ODE} Scalar 2nd order ODE $y'' = f(x,y,y')$ (up to point transformations) underlie $(\SL_3,P_{1,2})$ geometries, for which $\fM = 8$ and $\fS = 3 < \fU = 4$.  Locally, one has a 3-manifold $M$ with coordinates $(x,y,p)$ and split contact distribution $\cC = \cE \op \cF$ on $M$ with
 \begin{align}
 \cE = \langle \partial_x + p\partial_y + f(x,y,p) \partial_p \rangle, \quad \cF = \langle \partial_p \rangle.
 \end{align}
 We have $\dim(G_0) = 2$, and $G_0$ corresponds to arbitrary rescalings along $\cE$ and $\cF$.  We have $H_2(\fp_+,\fg)^1 \cong \bbL_1 \op \bbL_2$, with each $\bbL_i$ being a 1-dim $G_0$-irrep.  The components of $\kappa_H$ along $\bbL_1$ and $\bbL_2$ correspond to the well-known {\sl Tresse relative invariants} $I_1$ and $I_2 = f_{pppp}$.  For $I_1$, we refer to \cite[eqn (5.8)]{KT2017} and replace $(t,x,p)$ there with $(x,y,p)$.  Two submaximally symmetric models are:
 \begin{enumerate}
 \item[(i)] $y'' = \exp(y')$: symmetries are $\ff = \langle \partial_x, \partial_y, x\partial_x + (y-x)\partial_y - \partial_p \rangle$.  We have $I_1 = e^{3p}$ and $I_2 = e^p$ both nonvanishing.  Thus, $\kappa_H$ is not concentrated in a single irreducible component.
 \item[(ii)] $y'' = (xy'-y)^3$: symmetries are $\ff = \langle x\partial_y + \partial_p, x\partial_x - y\partial_y - 2p\partial_p, y\partial_x - p^2 \partial_p \rangle$.
 The evaluation map $\operatorname{ev}_o : \ff \to T_o M$ is surjective except along the singular set $\Sigma = \{ y = px \}$, so neighbourhoods of $o_1 \in \Sigma$ and $o_2 \not\in \Sigma$ (endowed with restricted geometric structures) are not locally equivalent.  We have $I_1 = 72(px-y)$ and $I_2 = 0$, so $\kappa_H$ vanishes along $\Sigma$.
 \end{enumerate}
 \end{example}

 A priori, we cannot exclude the possibility of similar limiting singular behavior as in Example \ref{X:2ODE}(ii) for submaximally symmetric structures occurring in geometries with $\rnk(G) \geq 3$, so we always work near a point where $\kappa_H$ is nonvanishing.  Constraining ourselves to the hypotheses of Theorem \ref{T:main} ultimately leads to a classification problem for {\em homogeneous} structures.

 We note that {\sl Cartan reduction} is a general method for classifying (homogeneous) geometric structures.  (See for example \cite{DMT2020} for a recent application.)  While this is a powerful, systematic method, it is typically applied on a case-by-case basis, and for any given structure it takes a substantial amount of effort to set up the correct structure equations (via the {\sl Cartan equivalence method}, for instance).  Moreover, its implementation can be extremely cumbersome to do by-hand (often being done in a symbolic algebra system such as {\tt Maple} or {\tt Mathematica}), and normalizations generally proceed in an ad-hoc manner.  In principle, it can be used to analyze submaximally symmetric structures, but in practice it is not a feasible method to arrive at the claimed generality of Theorem \ref{T:main}.  Our approach will be to proceed in a {\em uniform} manner by taking the Cartan-geometric viewpoint as the basic input, and make efficient use of representation theory.
  
 Let us briefly outline our article.  In \S\ref{S:ParAlg}, we recall relevant background from parabolic geometries and our earlier work on symmetry gaps, and formulate the notion of an algebraic model $(\ff;\fg,\fp)$ encoding any homogeneous parabolic geometry.  In \S\ref{S:CanCurv}, we recall Kostant's theorem, define the canonical curved model, and formulate the algebraic model classification problem (Problem \ref{P:problem}).  We then solve it, first for $(G_2,P_2)$ geometries (\S \ref{S:G2}), and then the general $\rnk(G) \geq 3$ case (\S \ref{S:gen-pf}).  We conclude in \S\ref{S:examples} with concrete examples of submaximally symmetric structures, which are asserted to be unique (over $\bbC$) from Theorem \ref{T:main}. \\
  
 {\bf Conventions}: The base manifold $M$ is always assumed to be connected.  We work in the smooth and holomorphic categories when referring to real and complex geometries, respectively.  For simple roots, we use the same ordering as in {\tt LiE} \cite{LiE}.

 \section{Parabolic geometries and algebraic models}
 \label{S:ParAlg}
 
 We begin by reviewing background from parabolic geometries and symmetry gaps -- see \cite{CS2009, KT2017} for more details.
 \subsection{Parabolic geometries}
 \label{S:parabolic}
 
 Let $G$ be a real or complex semisimple Lie group, $P \subset G$ a parabolic subgroup, and $\fp \subset \fg$ be their Lie algebras.  Then $\fg$ admits a natural $P$-invariant (decreasing) filtration $\fg = \fg^{-\nu} \supset ... \supset \fg^\nu$ (we put $\fg^i = \fg$ for $i < -\nu$, $\fg^i = 0$ for $i > \nu$), $\fg^1=\fp_+$ is the nilradical of $\fg^0 = \fp$, and $[\fg^i,\fg^j] \subset \fg^{i+j}$ for all $i,j \in \bbZ$.  There always exists {\sl grading element} $\sfZ \in \fg$ whose $\ad_\sfZ$-eigenvalues $\forall j \in \bbZ$ ({\sl degrees}) and eigenspaces $\fg_j := \{ x \in \fg : \ad_\sfZ(x) = j x \}$ ($\forall j \in \bbZ$) endow $\fg$ with the structure of a graded Lie algebra $\fg = \fg_{-\nu} \op \ldots \op \fg_\nu$ compatible with the filtration, i.e.\ $[\fg_i,\fg_j] \subset \fg_{i+j}$ and $\fg^i \cong \bigoplus_{j=i}^\nu \fg_j$. The {\sl associated-graded} Lie algebra $\tgr(\fg)$ is defined by $\tgr_i(\fg) := \fg^i / \fg^{i+1}$.  Given $\sfZ$ as above, we identify $\tgr_i(\fg) \cong \fg_i$ as $\fg_0$-modules, and if $x \in \fg^i$, we denote by $\tgr_i(x) \in \fg_i$ the projection to its {\sl leading part}.  We have $\sfZ \in \fz(\fg_0)$ (centre of $\fg_0$), $\fp = \fg_0 \op \fp_+$,  and the Killing form on $\fg$ identifies $(\fg / \fp)^* \cong \fp_+$ as $P$-modules.  Finally, letting $G_0 = \{ g \in P : \Ad_g (\fg_i) \subset \fg_i, \, \forall i \}$ be the {\sl Levi subgroup} (with Lie algebra $\fg_0$), and $P_+ = \exp(\fp_+) \leq P$, we have $P \cong G_0 \ltimes P_+$.
 
 A {\sl parabolic geometry} is a Cartan geometry $(\cG \to M, \omega)$ of type $(G,P)$, i.e.\ a (right) principal $P$-bundle $\cG \to M$ with a {\sl Cartan connection} $\omega \in \Omega^1(\cG,\fg)$:
 \begin{enumerate}
 \item[(i)] $\omega_u : T_u \cG \to \fg$ is a linear isomorphism $\forall u \in \cG$;
 \item[(ii)] $\omega$ is $P$-equivariant: $R_p^* \omega = \Ad_{p^{-1}} \circ \omega$, $\forall p \in P$;
 \item[(iii)] $\omega(\zeta_A) = A$, $\forall A \in \fp$, where $\zeta_A$ is the fundamental vertical vector field corresponding to $A$.
 \end{enumerate}
  The {\sl curvature} of $\omega$ is $K = d\omega + \frac{1}{2}[\omega,\omega] \in \Omega^2(\cG,\fg)$ (which is $P$-equivariant and {\sl horizontal}, i.e.\ $K(\zeta_A,\cdot) = 0$), or equivalently we have 
the {\sl curvature function} $\kappa : \cG \to \bigwedge^2 (\fg/\fp)^* \otimes \fg$ given by $\kappa(x,y) = K(\omega^{-1}(x), \omega^{-1}(y))$.  The geometry is {\sl flat} if $K=0$, which characterizes local equivalence to the flat model $(G \to G/P, \omega_G)$, where $\omega_G$ is the (left-invariant) Maurer--Cartan form on $G$.  Via the Killing form, the codomain of $\kappa$ identifies (as a $P$-module) with $C_2(\fp_+,\fg):= \bigwedge^2 \fp_+ \otimes \fg$.  These are 2-chains in the complex $(C_\bullet(\fp_+,\fg),\partial^*)$ with $\partial^*$ the Lie algebra homology differential.  We say that $(\cG \to M, \omega)$ is {\sl normal} if $\partial^* \kappa = 0$ and it is {\sl regular} if $\kappa(\fg^i,\fg^j) \subset \fg^{i+j+1}$ for any $i,j$.  Equivalently, if we naturally extend the filtration on $\fg$ to a filtration on $\bigwedge^2 \fp_+ \otimes \fg$, then we have $\kappa \in \ker(\partial^*)^1$.  This is  the subspace of $\ker(\partial^*) \subset \bigwedge^2 \fp_+ \otimes \fg$ on which $\sfZ$ acts with positive eigenvalues ({\sl degrees}).  There is a well-known equivalence of categories between regular, normal parabolic geometries and underlying geometric structures on $M$ (see \cite{CS2009} for details).
 
 For any regular, normal parabolic geometry, a key invariant is its {\sl harmonic curvature} $\kappa_H : \cG \to H_2(\fp_+,\fg) := \frac{\ker(\partial^*)}{\im(\partial^*)}$, given by $\kappa_H = \kappa\,\, \mod \im(\partial^*)$, and this $P$-equivariant function is a complete obstruction to flatness.  Moreover, $H_2(\fp_+,\fg)$ is a completely reducible $\fp$-representation, i.e.\ $\fp_+$-acts trivially.  As $\fg_0$-modules, $\fg / \fp \cong \fg_-$, and $C^k(\fg_-,\fg) := \bigwedge^k \fg_-^* \otimes \fg$ yields a complex $(C^\bullet(\fg_-,\fg),\partial)$ with respect to the standard Lie algebra cohomology differential $\partial$, for which we have the ($\fg_0$-invariant) algebraic Hodge decomposition:
 \begin{align} \label{E:Hodge}
 C^k(\fg_-,\fg) \cong \im(\partial) \op \ker(\Box) \op \im(\partial^*),
 \end{align}
 where $\Box = \partial \partial^* + \partial^* \partial$ is the ($\fg_0$-equivariant) algebraic Laplacian, with $\ker(\Box) = \ker(\partial) \cap \ker(\partial^*)$.   Then $H_2(\fp_+,\fg) = \frac{\ker(\partial^*)}{\im(\partial^*)} \cong \ker(\Box) \cong \frac{\ker(\partial)}{\im(\partial)} \cong H^2(\fg_-,\fg)$ as $\fg_0$-modules, which may be efficiently computed via Kostant's theorem (\S \ref{S:can-submax}).  By regularity, $\kappa_H$ has image in the subspace $H_2(\fp_+,\fg)^1 \subseteq H_2(\fp_+,\fg)$ on which $\sfZ$ acts with positive eigenvalues.  This corresponds to some $\fg_0$-submodule $H^2_+(\fg_-,\fg) \subseteq H^2(\fg_-,\fg)$ under the above identification.
  
 Finally, by \cite[Thm.3.1.12]{CS2009}, if $\kappa$ has lowest non-trivial degree $s > 0$, then its leading part $\tgr_s(\kappa)$ is harmonic and coincides with the degree $s$ component of $\kappa_H \neq 0$.  In particular, $\kappa_H$ being a complete obstruction to flatness follows from this.

 \subsection{Symmetry and Tanaka prolongation}
 \label{S:sym-pr}
 
 Two (regular, normal) parabolic geometries of type $(G,P)$ are {\sl equivalent} if there is a principal bundle isomorphism that pulls back one Cartan connection to the other, and an {\sl automorphism} is a self-equivalence.   A Cartan geometry $(\cG \stackrel{\pi}{\to} M, \omega)$ is {\sl (locally) homogeneous} if there is a Lie group acting by (local) automorphisms whose projection to $M$ yields a (locally) transitive action on $M$.  Infinitesimally, the symmetry algebra is 
 \begin{align}
 \mathfrak{inf}(\cG,\omega) = \{ \xi \in \fX(\cG)^P : \cL_\xi \omega = 0 \},
 \end{align}
 where $\fX(\cG)^P$ are the $P$-invariant vector fields on $\cG$.
 
 Let us now summarize how to equivalently view $\mathfrak{inf}(\cG, \omega)$ in a more algebraic manner  \cite{Cap2005a,CN2009,KT2017}. Fix {\em any} $u \in \cG$.  Then $\omega_u : T_u \cG \to \fg$ restricts to a linear injection on $\inf(\cG,\omega)$.  Letting $\ff = \ff(u) :=\omega_u(\inf(\cG,\omega))$, the Lie bracket on $\inf(\cG,\omega)$ transfers to the bracket on $\ff$ given by
 \begin{align} \label{E:f-br}
 [x,y]_\ff = [x,y] - \kappa_u(x,y), \quad \forall x,y \in \ff.
 \end{align}
 The $P$-invariant filtration on $\fg$ induces a filtration on $\ff$ via $\ff^i := \ff \cap \fg^i$. By regularity, $\kappa(\fg^i,\fg^j) \subset \fg^{i+j+1}$, so $[\ff^i,\ff^j]_\ff \subset \ff \cap \fg^{i+j} = \ff^{i+j}$, and $(\ff,[\cdot,\cdot]_\ff)$ becomes a filtered Lie algebra (generally {\em not} a Lie subalgebra of $\fg$).  By regularity, the {\sl associated-graded} $\fs := \tgr(\ff)$, defined by $\fs_i := \ff^i / \ff^{i+1}$ is identified as a graded subalgebra of $\fg$ (via $\fs_i \inj \ff^i / \fg^{i+1} \subseteq \fg^i / \fg^{i+1} \cong \fg_i$).  The filtrand $\ff^0 \subseteq \fg^0 = \fp$ satisfies the important algebraic condition $\ff^0 \cdot \kappa = 0$, which implies $\ff^0 \cdot \kappa_H = 0$.  Since $\fp_+$ acts trivially on $H_2(\fp_+,\fg)$, then $\ff^1 \cdot \kappa_H = 0$ always, so $\fs_0 \cdot \kappa_H = 0$, i.e. $\fs_0$ is contained in the annihilator $\fa_0 := \fann(\kappa_H(u)) \subseteq \fg_0$.

 Now define the following (extrinsic) {\sl Tanaka prolongation} algebra $\fa^\phi$ as in \cite{KT2017}:
  
 \begin{defn}[Extrinsic Tanaka prolongation] \label{D:TP} Let $\fa_0 \subseteq \fg_0$ be a Lie subalgebra.  Extend this to a $\bbZ$-graded Lie subalgebra $\fa \subseteq \fg$ by defining $\fa_- = \fg_-$ and $\fa_k = \{ X \in \fg_k : [X,\fg_{-1}] \subseteq \fa_{k-1} \}$ for $k > 0$.  Denote $\fa = \bop_k \fa_k$ by $\tpr^\fg(\fg_-,\fa_0)$.  When $\phi$ lies in some $\fg_0$-representation, we write $\fa^\phi := \tpr^\fg(\fg_-,\fann(\phi))$.
 \end{defn}
 
 The constraint $\fs_0 \subseteq \fa_0$ propagates via Tanaka prolongation to the higher levels.
 More precisely, the following important inclusion holds:
 \begin{align} \label{E:s-Kh}
 \fs(u) \subseteq \fa^{\kappa_H(u)}, \quad \forall u \in \cG.
 \end{align}
 Otherwise put, the symmetry algebra $\ff$ is a {\sl constrained filtered sub-deformation of $\fa^{\kappa_H}$}, i.e.\
 \begin{enumerate}
 \item[(i)] $\ff$ is a filtered deformation of the graded subspace $\fs(u) \subseteq \fa^{\kappa_H(u)}$, and
 \item[(ii)] $\ff$ is constrained: e.g.\ it is a filtered subspace of $\fg$ and satisfies \eqref{E:f-br} for some $\kappa$.
 \end{enumerate}
 The inclusion \eqref{E:s-Kh} was established in \cite[Thm.2.4.6]{KT2017} on the open dense set of so-called {\sl regular points}, i.e.\ those $u \in \cG$ on which $\dim\, \fs_i(u)$ are locally constant functions $\forall i$, and was generalized to {\em all} points in  \cite[Thm.3.3]{KT2018}.  If the given geometry is not flat, then $\kappa_H(u) \neq 0$ at some $u \in \cG$, so $\sfZ \not\in \fann(\kappa_H(u))$ by the regularity assumption on $\kappa$, and hence $\dim(\inf(\cG,\omega)) = \dim(\fs) < \dim(\fg)$.  Thus, the flat model is locally the unique maximally symmetric geometry.  Defining
 \begin{align} 
 \fS &:= \max\{ \dim(\inf(\cG,\omega)) : (\cG \to M,\omega) \mbox{ regular, normal of type $(G,P)$ and } \kappa_H \not\equiv 0 \}, \label{E:fS}\\
 \fU &:= \max\{ \dim(\fa^\phi) : 0 \neq \phi \in H^2_+(\fg_-,\fg) \}, \label{E:fU}
 \end{align}
 equation \eqref{E:s-Kh} immediately implies
 \begin{align}
 \fS \leq \fU < \dim(\fg).
 \end{align}
 A (regular, normal) geometry with $\dim(\inf(\cG,\omega)) = \fS$ is {\sl submaximally symmetric}.  A priori, it should not be assumed that these are locally homogeneous, particularly if $\fS < \fU$.  

 \begin{defn}
 Let $\cO \subseteq H_2(\fp_+,\fg)^1$ be a $G_0$-invariant subset.  Let 
 \begin{align*}
 \fS_\cO &:= \max\{ \dim(\inf(\cG,\omega)) : (\cG \to M,\omega) \mbox{ regular, normal of type $(G,P)$,  } \im(\kappa_H) \subseteq \cO,\, \kappa_H \not\equiv 0 \},\\
 \fU_\cO &:= \max\{ \dim(\fa^\phi) : 0 \neq \phi \in \cO \}.
 \end{align*}
 \end{defn}

 \begin{lemma} \label{L:hom}
 For regular, normal parabolic geometries of type $(G,P)$, and $\cO \subseteq H_2(\fp_+,\fg)^1$ a $G_0$-invariant subset, suppose that $\fS_\cO = \fU_\cO$.  Then any $(\cG \to M, \omega)$ with $\im(\kappa_H) \subseteq \cO$ and $\dim(\inf(\cG,\omega)) = \fU_\cO$ is locally homogeneous near any $u \in \cG$ with $\kappa_H(u) \neq 0$.
 \end{lemma}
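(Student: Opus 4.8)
The plan is to show, at every point where the harmonic curvature is nonzero, that the associated-graded of the symmetry algebra exhausts $\fa^{\kappa_H}$ — in particular that its negative part exhausts $\fg_-$ — and then to propagate the resulting infinitesimal transitivity to local homogeneity. First I would fix $u \in \cG$ with $\kappa_H(u) \neq 0$. Since $\im(\kappa_H) \subset \cO$, the element $\kappa_H(u)$ is a nonzero member of $\cO$, so the very definition of $\fU_\cO$ gives $\dim(\fa^{\kappa_H(u)}) \leq \fU_\cO$. On the other hand, as $\omega_u$ restricts to a linear injection on $\inf(\cG,\omega)$, both $\ff = \ff(u)$ and its associated-graded $\fs(u) = \tgr(\ff)$ have dimension $\dim(\inf(\cG,\omega)) = \fU_\cO$. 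Feeding these into the inclusion \eqref{E:s-Kh} produces the squeeze
\[
\fU_\cO = \dim(\fs(u)) \leq \dim(\fa^{\kappa_H(u)}) \leq \fU_\cO,
\]
forcing equality throughout and hence $\fs(u) = \fa^{\kappa_H(u)}$.

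Next I would extract infinitesimal transitivity. By Definition \ref{D:TP} one has $\fa^{\kappa_H(u)}_- = \fg_-$, so the equality above gives $\fs_-(u) = \fg_-$. Unwinding $\fs_i(u) = \ff^i/\ff^{i+1}$, this says $\dim(\ff/\ff^0) = \dim(\fg_-) = \dim(\fg/\fp)$; since the evaluation map $\inf(\cG,\omega) \to T_{\pi(u)}M$ is, under $\omega_u$ and the identification $T_{\pi(u)}M \cong \fg/\fp$, simply $x \mapsto x \bmod \fp$ with image $(\ff + \fp)/\fp$, it is therefore surjective. Thus the infinitesimal symmetries span $T_{\pi(u)}M$.

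Finally I would spread this over a neighbourhood. Because $P$ acts on $H_2(\fp_+,\fg)$ by invertible linear maps, the vanishing set of $\kappa_H$ is $P$-saturated and hence descends to a closed subset of $M$; its complement $V$ is open and contains $\pi(u)$ by continuity of $\kappa_H$. The squeeze of the first paragraph applies verbatim at every $u'$ with $\kappa_H(u') \neq 0$ — again $\dim(\fs(u')) = \fU_\cO$ since $\omega_{u'}$ is injective on $\inf(\cG,\omega)$ — so the evaluation map is surjective at each point of $V$. The symmetry vector fields therefore span $TM$ throughout $V$, their orbits are open, and the geometry is locally homogeneous near $\pi(u)$.

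I expect no deep obstacle: once \eqref{E:s-Kh} and the definition of $\fU_\cO$ are in hand, the collapse of the two-sided bound is immediate from the hypothesis $\dim(\inf(\cG,\omega)) = \fU_\cO$ (the assumption $\fS_\cO = \fU_\cO$ guarantees this dimension is the one being realized). The only steps demanding care are the passage from the graded statement $\fs_-(u) = \fg_-$ to honest surjectivity of the evaluation map, and the verification that the non-vanishing locus of $\kappa_H$ is open and saturated — this saturation is precisely what upgrades pointwise infinitesimal transitivity into genuine local homogeneity on a neighbourhood of $\pi(u)$.
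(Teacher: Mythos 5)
Your proposal is correct and follows essentially the same route as the paper: the dimension squeeze via the inclusion \eqref{E:s-Kh} forces $\fs(u) = \fa^{\kappa_H(u)}$, hence $\fs_-(u) = \fg_-$, which yields local homogeneity. The only difference is cosmetic: where the paper compresses the final step into a one-line appeal to Lie's third theorem, you spell out the surjectivity of the evaluation map and its persistence on an open neighbourhood, which is a harmless (and arguably clarifying) elaboration.
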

 
 \begin{proof}
 Fix $u \in \cG$ with $\kappa_H(u) \neq 0$.  By \eqref{E:s-Kh}, $\dim(\fs(u)) = \dim(\fa^{\kappa_H(u)}) \leq \fU_\cO$.  By hypothesis, $\dim(\fs(u)) = \dim(\inf(\cG,\omega)) = \fU_\cO$, so \eqref{E:s-Kh} implies $\fs(u) = \fa^{\kappa_H(u)}$.  Hence, $\fs_-(u) = \fg_-$, which implies local homogeneity by Lie's third theorem.
 \end{proof}

 \subsection{Homogeneous parabolic geometries}
 
 Let $(\cG \to M, \omega)$ be homogeneous with respect to the Lie group $F$.  Fix $u \in \cG$, and let $F^0 \subset F$ be the stabilizer of $o = \pi(u) \in M$.  Given any $f_0 \in F^0$, we have $f_0 \cdot u = u\cdot \iota(f_0)$ for some Lie group homomorphism $\iota : F^0 \to P$.  This defines a right $F^0$-action on $F\times P$ via $(f,p)\cdot f_0 = (ff_0,\iota(f_0^{-1})p)$ and we let $F\times_{F^0} P$ be the collection of all $F^0$-orbits $\overline{(f,p)}$.  Letting $\ff$ and $\ff^0$ be the Lie algebras of $F$ and $F^0$ respectively, we have \cite[Prop.1.5.15]{CS2009}:

 \begin{enumerate}
 \item $\cG\to M$ is equivalent to the associated bundle $F \times_{F^0} P \to  F/F^0$.
 \item Any $F$-invariant Cartan connection $\omega \in \Omega^1(F \times_{F^0} P, \fg)$ of type $(\fg,P)$ is completely determined by the following:
 \end{enumerate} 

 \begin{defn} \label{D:ACC} 
 An {\sl algebraic Cartan connection of type $(\fg,P)$ on $(\ff,F^0)$} is a linear map $\varpi: \ff \to \fg$ with:
 \begin{enumerate}
 \item[(C1)] $\varpi|_{\ff^0} = \iota'$, where $\iota' : \ff^0 \to\fp$ is the differential of $\iota : F^0 \to P$.
 \item[(C2)] $\Ad_{\iota(f)} \circ \varpi = \varpi \circ \Ad_f$, $\forall f \in F^0$.  Infinitesimally:
 \begin{align}
  [\varpi(x),\varpi(y)] = \varpi([x,y]_\ff), \quad \forall x \in \ff^0, \quad \forall y \in \ff, \tag{C2'}\label{E:C2p}
 \end{align}
 where $[\cdot,\cdot]_\ff$ and $[\cdot,\cdot]$ are the Lie brackets on $\ff$ and $\fg$ respectively.  If $F^0$ is connected, then $({\rm C}2)$ and \eqref{E:C2p} are equivalent.
 \item[(C3)] $\varpi$ induces a vector space isomorphism $\ff/ \ff^0 \cong \fg / \fp$.
 \end{enumerate}
 \end{defn}

 Indeed, given $\varpi$ as above, we obtain $\omega$ by factoring $\hat\omega^1 \in \Omega^1(F \times P, \fg)$ given by
 \begin{align*}
 \hat\omega_{(f,p)}(X,Y) = \Ad_{p^{-1}} \varpi(X) + Y, \quad (X,Y) \in T_f F \times T_p P.
 \end{align*}
 The basepoint change $u \mapsto f\cdot u$ leaves $(\iota,\varpi)$ unchanged, but a fibrewise change $u \mapsto u \cdot p$ induces $(\iota,\varpi) \mapsto (\Ad_{p^{-1}} \circ \iota, \Ad_{p^{-1}} \circ \varpi)$.
  
 Define $\tilde\kappa(x,y) := [\varpi(x),\varpi(y)] - \varpi([x,y]_\ff)$, so $\tilde\kappa \in \bigwedge^2(\ff/\ff^0)^* \otimes \fg$ by \eqref{E:C2p}.  The curvature  of $\omega$ corresponds to $\kappa \in \bigwedge^2(\fg/\fp)^*\otimes \fg$ given by $\kappa(x,y) = \tilde\kappa(\varpi^{-1}(x),\varpi^{-1}(y))$.  The notions of regularity and normality of $\kappa$ are immediately specialized to this algebraic setting, as is the quotient object $\kappa_H = \kappa \,\mod \im(\partial^*) \in H_2(\fp_+,\fg)^1$.
  
 \subsection{Algebraic models}
 \label{S:alg-models}
 
 Note that $({\rm C}3)$ and \eqref{E:C2p} forces $\ker(\varpi) \subset \ff^0$ to be an ideal in $\ff$.  The $F$-action on $F/F^0$ can always assumed to be {\sl infinitesimally effective}, i.e.\ $\ff^0$ does not contain any non-trivial ideals of $\ff$ (hence, $\ker(\varpi) = 0$).  (Otherwise, we may without loss of generality quotient both $F$ and $F^0$ by the corresponding normal subgroup.)
 Consequently, we assume that $\varpi : \ff \to \fg$ is injective and identify $\ff$ with its image in $\fg$. This motivates the following definition.
 
 \begin{defn} \label{D:alg-model}
 An {\em algebraic model $(\ff;\fg,\fp)$} is a Lie algebra $(\ff,[\cdot,\cdot]_\ff)$ such that:
 \begin{enumerate}[label=\textnormal{(\arabic*)}]
 \item[\mylabel{D:M1}{(M1)}] $\ff \subseteq \fg$ is a vector subspace with inherited filtration $\ff^i := \ff \cap \fg^i$ such that $\fs = \tgr(\ff)$ satisfies $\fs_- = \fg_-$.
 \item[\mylabel{D:M2}{(M2)}] $\ff^0$ inserts trivially into $\tilde\kappa(x,y) = [x,y] - [x,y]_\ff$, so identify $\tilde\kappa \in \bigwedge^2(\ff/\ff^0)^* \otimes \fg$ with $\kappa \in \bigwedge^2 (\fg/\fp)^* \otimes \fg$.
 \item[\mylabel{D:M3}{(M3)}] $\kappa$ is regular and normal, i.e.\ $\kappa \in \ker(\partial^*)^1$.
 \end{enumerate}
 \end{defn} 
 
 The result below immediately follows from the general theory recalled in \S \ref{S:sym-pr}, but it is instructive to give proofs directly following from Definition \ref{D:alg-model} above.
 
 \begin{prop} \label{P:algCC} 
 Let $(\ff;\fg,\fp)$ be an algebraic model.  Then
 \begin{enumerate}
 \item $(\ff,[\cdot,\cdot]_\ff)$ is a filtered Lie algebra.  (In general, $\ff$ is not a Lie subalgebra of $\fg$.)
 \item $\ff^0 \cdot \kappa = 0$, i.e.\ $[z,\kappa(x,y)] - \kappa([z,x],y) - \kappa(x,[z,y]) = 0$, $\forall x,y \in \ff$ and $\forall z \in \ff^0$.
 \item $\fs \subseteq \fa^{\kappa_H}$, where $\kappa_H := \kappa \,\mod \im(\partial^*)$.
 \end{enumerate}
 \end{prop}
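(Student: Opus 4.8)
The plan is to treat the three assertions in order, each deduced directly from Definition \ref{D:alg-model} together with the Jacobi identities on $\ff$ and on $\fg$. Throughout I write $\kappa := \tilde\kappa$ under the identification of \ref{D:M2}, so that the bracket on $\ff$ is $[x,y]_\ff = [x,y] - \kappa(x,y)$, where $[\cdot,\cdot]$ is the bracket of $\fg$. The single structural fact I will lean on repeatedly is that $\ff^0$ inserts trivially into $\kappa$, i.e.\ $\kappa(z,\cdot) = 0$ for all $z \in \ff^0$, so that $[z,a]_\ff = [z,a]$ for every $z \in \ff^0$ and $a \in \ff$.

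\textbf{Part (1).} Here I would simply invoke regularity from \ref{D:M3}: $\kappa(\fg^i,\fg^j) \subset \fg^{i+j+1}$. For $x \in \ff^i$ and $y \in \ff^j$ we have $[x,y] \in \fg^{i+j}$ (as $\fg$ is filtered) and $\kappa(x,y) \in \fg^{i+j+1} \subset \fg^{i+j}$, whence $[x,y]_\ff \in \fg^{i+j}$. Since $\ff$ is closed under $[\cdot,\cdot]_\ff$, we get $[x,y]_\ff \in \ff \cap \fg^{i+j} = \ff^{i+j}$, so $[\ff^i,\ff^j]_\ff \subset \ff^{i+j}$ and $\ff$ is a filtered Lie algebra. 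The failure to be a genuine subalgebra of $\fg$ is measured exactly by $\kappa$.

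\textbf{Part (2).} Fix $z \in \ff^0$ and $x,y \in \ff$, and expand the $\ff$-Jacobi identity on the triple $(x,y,z)$. Using $\kappa(\cdot,z)=0$ to collapse every bracket taken against $z$, and substituting $[a,b]_\ff = [a,b] - \kappa(a,b)$ into the surviving terms, the $\fg$-Jacobi identity $[[x,y],z]+[[y,z],x]+[[z,x],y]=0$ cancels the triple $\fg$-brackets. After rewriting $\kappa([y,z],x) = \kappa(x,[z,y])$ by antisymmetry, what remains is precisely $[z,\kappa(x,y)] - \kappa([z,x],y) - \kappa(x,[z,y]) = 0$, i.e.\ $\ff^0 \cdot \kappa = 0$. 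The only delicate point is the sign and antisymmetry bookkeeping in this reduction, which is routine.

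\textbf{Part (3), and the main obstacle.} I would first upgrade Part (1) to the statement that $\fs = \tgr(\ff)$ is a graded \emph{subalgebra} of $\fg$: for $x \in \ff^i$, $y \in \ff^j$, the degree shift in regularity forces $\kappa(x,y) \in \fg^{i+j+1}$, so the leading parts satisfy $\tgr_{i+j}([x,y]_\ff) = \tgr_{i+j}([x,y]) = [\tgr_i(x),\tgr_j(y)]$; hence the induced graded bracket on $\fs$ is the restriction of the $\fg$-bracket. Writing $\fa := \fa^{\kappa_H} = \tpr^\fg(\fg_-,\fann(\kappa_H))$, by \ref{D:M1} we have $\fs_- = \fg_- = \fa_-$. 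Next, Part (2) gives $\ff^0 \cdot \kappa = 0$; since $\partial^*$ is $\fp$-equivariant this descends to $\ff^0 \cdot \kappa_H = 0$, and because $\fp_+$ acts trivially on $H_2(\fp_+,\fg)$ only the $\fg_0$-part survives, yielding $\fs_0 \subset \fann(\kappa_H) = \fa_0$. Finally I would induct on $k > 0$: using that $\fs$ is a graded subalgebra with $\fs_{-1} = \fg_{-1}$, one has $[\fs_k,\fg_{-1}] \subset \fs_{k-1} \subset \fa_{k-1}$ by the inductive hypothesis, so $\fs_k \subset \{X \in \fg_k : [X,\fg_{-1}] \subset \fa_{k-1}\} = \fa_k$ by Definition \ref{D:TP}. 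I expect Part (3) to be the genuine obstacle: one must secure the graded-subalgebra property (which relies on the \emph{strict} degree shift of regularity, not merely filtered compatibility) before the Tanaka induction can even be phrased, and one must track that the annihilation condition passes correctly from $\kappa$ to $\kappa_H$ through the homology quotient.
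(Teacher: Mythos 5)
Your proposal is correct and follows essentially the same route as the paper's proof: part (1) from regularity of $\kappa$, part (2) by expanding the $\ff$-Jacobi identity using trivial $\ff^0$-insertion into $\kappa$ and cancelling via the $\fg$-Jacobi identity, and part (3) by passing the annihilation through $\im(\partial^*)$ via $P$-equivariance and complete reducibility, then running the Tanaka prolongation induction on the graded subalgebra $\fs$. Your extra care in justifying that $\fs$ is a graded subalgebra of $\fg$ (via the strict degree shift in regularity) is a point the paper states more tersely but uses identically.
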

 
 \begin{proof}
 For (1), if $x \in \ff^i$ and $y \in \ff^j$, then $\kappa(x,y) \in \fg^{i+j+1}$ by regularity of $\kappa$, i.e.\ \ref{D:M3}, so (1) implies $[x,y]_\ff \in \fg^{i+j}$.  But $\ff$ is a Lie algebra, so $[x,y]_\ff \in \ff \cap \fg^{i+j} = \ff^{i+j}$ and $[\ff^i,\ff^j]_\ff \subset \ff^{i+j}$.  For (2), we use the Jacobi identity and the fact that $\kappa$ vanishes under $\ff^0$-insertions by \ref{D:M2}.  Namely, let $x,y \in \ff$ and $z \in \ff^0$.  Then
 \begin{align*}
 0 &= [[x,y]_\ff,z]_\ff + [[y,z]_\ff,x]_\ff + [[z,x]_\ff,y]_\ff \\
 &= [[x,y] - \kappa(x,y),z]_\ff +  [[y,z],x]_\ff + [[z,x],y]_\ff \\
 &= [[x,y],z] - [\kappa(x,y),z] +  [[y,z],x] - \kappa([y,z],x) + [[z,x],y] - \kappa([z,x],y) \\
 &= [z,\kappa(x,y)] - \kappa([y,z],x) - \kappa([z,x],y).
 \end{align*}
 
 Finally, we prove (3).  Since $\partial^*$ is $P$-equivariant and $\ff^0 \subseteq \fp = \fg^0$, then $\ff^0 \cdot \im(\partial^*) \subseteq \im(\partial^*)$, so (2) implies $\ff^0 \cdot \kappa_H = 0$, which factors to $\fs_0 \cdot \kappa_H = 0$ by complete reducibility of $H_2(\fp_+,\fg)$.  Letting $\fa := \fa^{\kappa_H}$, this means $\fs_0 \subseteq \fa_0 = \fann(\kappa_H)$.   By regularity, $\fs \subseteq \fg$ is a graded Lie subalgebra, so for any $k > 0$, $[\fs_k,\fg_{-1}] = [\fs_k,\fs_{-1}] \subseteq \fs_{k-1}$.  Inductively, we have $\fs_k \subseteq \fa_k$ for all $k > 0$.
 \end{proof}

 Importantly, we note that the set of algebraic models of type $(\fg,\fp)$:
 \begin{itemize}
 \item admits a $P$-action via $\ff \mapsto \Ad_p \ff$ for any $p \in P$.  All algebraic models in the same $P$-orbit are to be regarded as equivalent, so we must always account for this redundancy.
 \item is partially ordered: declare that $\ff \leq \ff'$ if and only if $\ff$ is a Lie subalgebra of $\ff'$.  We will focus on {\sl maximal elements} $\ff$.  (We view non-maximal elements as non-optimal descriptions of the same geometric structure.)
\end{itemize}

 \begin{remark}
Conversely, by \cite[Lemma 4.1.4]{KT2017}, to each algebraic model $(\ff;\fg,\fp)$, there exists a locally homogeneous geometry $(\cG \to M, \omega)$ of type $(G,P)$ with $\inf(\cG,\omega)$ containing a subalgebra isomorphic to $\ff$.  If $\ff$ is maximal with respect to the partial order defined above, then $\inf(\cG,\omega) \cong \ff$.
 \end{remark}

 Since $\tgr(\ff) = \fs$, we may view $\ff \subseteq \fg$ as a graph over $\fs \subseteq \fg$.  Namely, choosing some graded subspace $\fs^\perp \subseteq \fg$ so that $\fg = \fs \op \fs^\perp$ (in fact, $\fs^\perp \subseteq \fp$ since $\fg_- \subseteq \fs$ by hypothesis), we can write 
 \begin{align}
 \ff = \bigoplus_{i=-\nu}^\nu \{ x + \fd(x) : x \in \fs_i \}
 \end{align}
 for some {\em unique} linear map $\fd : \fs \to \fs^\perp$ of {\em positive} degree, i.e.\ if $x \in \fs_i$, then $\fd(x) \in \fs^\perp \cap \fg^{i+1}$.  We refer to $\fd$ as the {\sl deformation map} and $\fd(x)$ as the {\sl tail} of $x$.
 
 \begin{lemma} \label{L:def-T-inv}
 Let $T \in \ff^0$ with $\fs$ and $\fs^\perp$ being $\ad_T$-invariant.  Then $T \cdot \fd = 0$, i.e.\ $\ad_T \circ \fd = \fd \circ \ad_T$.
 \end{lemma}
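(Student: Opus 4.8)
The plan is to exploit the fact that, although $\ff$ is generally \emph{not} a Lie subalgebra of $\fg$, the $\fg$-adjoint action $\ad_T$ of any $T \in \ff^0$ genuinely preserves $\ff$. First I would observe that for $T \in \ff^0$ the two brackets agree on $T$: by \ref{D:M2}, $\ff^0$ inserts trivially into $\kappa$, so $\kappa(T,y) = 0$ for every $y \in \ff$, whence $[T,y]_\ff = [T,y] - \kappa(T,y) = [T,y]$. Since $(\ff,[\cdot,\cdot]_\ff)$ is a Lie algebra (Proposition \ref{P:algCC}(1)) and $T \in \ff$, it follows that $\ad_T(y) = [T,y]_\ff \in \ff$ for all $y \in \ff$; that is, $\ad_T$ (computed in $\fg$) leaves $\ff$ invariant.

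Next I would record that the deformation map $\fd$ is nothing but the graph data of $\ff$ relative to the splitting $\fg = \fs \op \fs^\perp$: writing $P_\fs$ and $P_{\fs^\perp}$ for the associated projections, every $w \in \ff$ satisfies $P_{\fs^\perp}(w) = \fd(P_\fs(w))$, with $P_\fs|_\ff : \ff \to \fs$ a linear isomorphism (injectivity follows since $\fd$ has positive degree and $\fs \cap \fs^\perp = 0$). The hypothesis that both $\fs$ and $\fs^\perp$ are $\ad_T$-invariant says precisely that $\ad_T$ commutes with each of $P_\fs$ and $P_{\fs^\perp}$.

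Finally I would combine these two facts. Fix $x \in \fs$ and set $w := x + \fd(x) \in \ff$, so $P_\fs(w) = x$ and $P_{\fs^\perp}(w) = \fd(x)$. By the first step $\ad_T(w) \in \ff$, so by the graph description $P_{\fs^\perp}(\ad_T w) = \fd\bigl(P_\fs(\ad_T w)\bigr)$. Using that $\ad_T$ commutes with the two projections, the left-hand side equals $\ad_T(\fd(x))$ and the argument on the right equals $\ad_T(x)$, yielding $\ad_T(\fd(x)) = \fd(\ad_T(x))$. As $x \in \fs$ was arbitrary, this is the asserted identity $\ad_T \circ \fd = \fd \circ \ad_T$.

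The only genuine step --- and the one I would be most careful about --- is the first: the passage from ``$T$ lies in the abstract Lie algebra $\ff$'' to ``$\ad_T$ preserves $\ff$ inside $\fg$.'' This is exactly where \ref{D:M2} (the trivial insertion of $\ff^0$ into $\kappa$) is essential; without it the $\fg$-bracket $[T,y]$ would differ from $[T,y]_\ff$ by the curvature term and need not return to $\ff$. Once $\ad_T$-invariance of $\ff$ is secured, the remainder is a formal consequence of the uniqueness of the graph representation of $\ff$ over $\fs$, and no computation with bracket structure constants is required.
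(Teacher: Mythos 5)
Your proof is correct and follows essentially the same route as the paper's: use \ref{D:M2} to replace $[T,\cdot]_\ff$ by the $\fg$-bracket $[T,\cdot]$ so that $\ad_T$ preserves $\ff$, then apply $\ad_T$ to $x+\fd(x)\in\ff$ and read off $\ad_T(\fd(x))=\fd(\ad_T x)$ from the uniqueness of the graph decomposition $\fg=\fs\op\fs^\perp$. Your projection-operator phrasing is just a slightly more formal packaging of the paper's ``by uniqueness of $\fd$'' step.
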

 
 \begin{proof} Given $x \in \fs$, we have $x + \fd(x) \in \ff$ and
 $[T,x + \fd(x)]_\ff \in \ff$.  By \ref{D:M2}, $[T,x + \fd(x)]_\ff= [T,x + \fd(x)] = [T,x] + [T,\fd(x)]$.  But $[T,x] \in \fs$ and $[T,\fd(x)] \in \fs^\perp \cap \fg^{i+1}$ since $\fd$ has positive degree, so by uniqueness of $\fd$, we have $[T,\fd(x)] = \fd([T,x])$.
 \end{proof}
 
 \section{The canonical curved model and local uniqueness}
 \label{S:CanCurv}
 
 We focus on proving Theorem \ref{T:main} in the complex case.  The arguments in the split-real case are almost exactly the same, and potentially differ only in the final step of \S \ref{S:gen-pf}.  This is described at the end of  \S\ref{S:CanCurv}.

 \subsection{Kostant's theorem and the canonical curved model}
 \label{S:can-submax}
 
 Let $G$ be a complex semisimple Lie group, $\fh \subset \fg$ a Cartan subalgebra with $\ell := \dim(\fh) = \rnk(\fg)$, $\Delta \subset \fh^*$ the associated root system, $\fg_\alpha$ the root space for $\alpha \in \Delta$, $\Delta^+ \subset \Delta$ the positive roots relative to a choice of simple root system $\{ \alpha_i \}_{i=1}^\ell$, and 
$\{ \sfZ_i \}_{i=1}^\ell \subset \fh$ its dual basis, i.e.\ $\sfZ_i(\alpha_j) = \alpha_j(\sfZ_i) = \delta_{ij}$.  If $\fk \subseteq \fg$ is an $\fh$-invariant subspace, we write $\Delta(\fk) := \{ \alpha \in \Delta : \fg_\alpha \subseteq \fk \}$, and $\Delta^+(\fk) := \Delta(\fk) \cap \Delta^+$.  A parabolic subgroup $P \subset G$ with Lie algebra $\fp \subset \fg$ is encoded by a subset $I_\fp \subseteq \{ 1, \ldots, \ell \}$, with associated grading element $\sfZ  := \sum_{i\in I_\fp} \sfZ_i$, and $\fp = \fg_{\geq 0} = \bigoplus_{i \geq 0} \fg_i$ relative to it.  On the Dynkin diagram of $\fg$, we put crosses at nodes corresponding to $I_\fp$, and refer to $P$ using subscripts, e.g.\ if $I_\fp = \{ i,j \}$, then $P = P_{i,j}$, etc.  (In our convention, the Borel subalgebra has crosses on {\em all} Dynkin diagram nodes.)

 The Killing form of $\fg$ induces a non-degenerate pairing $\langle \cdot, \cdot \rangle$ on $\fh^*$.  Letting $\alpha^\vee := \frac{2\alpha}{\langle \alpha, \alpha \rangle}$ be the coroot of $\alpha \in \Delta$, we have the Cartan matrix $c = (c_{ij})$ with $c_{ij} := \langle \alpha_i, \alpha_j^\vee \rangle$.  The fundamental weights $\{ \lambda_j \}_{j=1}^\ell$ are defined by $\langle \lambda_i, \alpha_j^\vee \rangle = \delta_{ij}$, and these satisfy $\alpha_i = \sum_{j=1}^\ell c_{ij} \lambda_j$.  Corresponding to $\alpha_j$ is the simple reflection $\sigma_j$ defined by $\sigma_j(x) = x - \langle x, \alpha_j^\vee \rangle \alpha_j$, and the Weyl group $W$ is the group generated by all simple reflections.

 Kostant's theorem \cite{Kos1961} yields an efficient $\fg_0$-module description of $H_2(\fp_+,\fg) \cong H^2(\fg_-,\fg)$: it is the direct sum of $\fg_0$-irreps $\bbV_\mu$, each of multiplicity one, and having {\em lowest} weight $\mu = -w\bullet \lambda$, where:
 \begin{enumerate}
 \item $\lambda$ is the highest weight of a simple ideal of $\fg$;
 \item $w = (jk) := \sigma_j \circ \sigma_k \in W^\fp(2)$ is a length 2 word of the Hasse diagram $W^\fp \subset W$.  Concretely for our purposes here, this is equivalent to: $j \in I_\fp$ and either: (i) $k \in I_\fp$, or (ii) $c_{jk} < 0$.
 \item $\bullet$ refers to the affine action of $W$ on weights: letting $\rho := \sum_{i=1}^\ell \lambda_i$, we have
 \begin{align} \label{E:mu}
 \mu &= -w\bullet \lambda = -w(\lambda+\rho) + \rho = -w\bullet 0 + w(-\lambda).
 \end{align}
 \end{enumerate}
 Via the $\fg_0$-module isomorphism $H^2(\fg_-,\fg) \cong \ker(\Box)$ from \eqref{E:Hodge}, a representative lowest weight vector $\phi_0 \in \bbV_\mu \subset \ker(\Box) \subset \bigwedge^2(\fg_-)^* \otimes \fg \cong \bigwedge^2 \fp_+ \otimes \fg$ is given in terms of root vectors $e_\gamma$ by:
 \begin{align} \label{E:phi0}
 \phi_0 := e_{\alpha_j} \wedge e_{\sigma_j(\alpha_k)} \otimes e_{w(-\lambda)}.
 \end{align}
 To interpret $\phi_0$ as a 2-cochain, we identify $e_{\alpha_j}$ and $e_{\sigma_j(\alpha_k)}$ as the dual elements $(e_{-\alpha_j})^*$ and $(e_{-\sigma_j(\alpha_k)})^*$ via the Killing form.  (Here, we fix root vectors yielding a basis on $\fg/\fp$.)
 
 For $\cO = \bbV_\mu \backslash \{ 0 \}$, define $\fU_\mu := \fU_\cO$ and $\fS_\mu := \fS_\cO$.  By \cite[Prop.3.1.1]{KT2017}, we have 
 \begin{align} \label{E:TPmax}
 \dim(\fa^\phi) \leq \dim(\fa^{\phi_0}), \quad \forall \phi \in \bbV_\mu \backslash \{ 0 \},
 \end{align}
 with equality precisely when the projectivizations $[\phi]$ and $[\phi_0]$ lie in the same $G_0$-orbit in $\bbP(\bbV_\mu)$.  Hence, $\fU_\mu = \dim(\fa^{\phi_0})$.  Concerning realizability of this upper bound, we have:
 
 \begin{defn} \label{D:canonical}
 Use notations as above with $G$ simple.  Suppose $w \in W^\fp(2)$ satisfies $w(-\lambda) \in \Delta^-$ and $\sfZ(\mu) > 0$.  The {\sl canonical curved model of type $(\fg,\fp,\bbV_\mu)$} is the algebraic model $(\ff;\fg,\fp)$ given by
 defining $\ff := \fa^{\phi_0}$ as a vector subspace of $\fg$, equipped with the filtration inherited from $\fg$, and deformed bracket $[\cdot,\cdot]_\ff := [\cdot,\cdot] - \phi_0(\cdot,\cdot)$.
 \end{defn}
 
By \cite[Lemma 4.1.1]{KT2017}, $(\ff,[\cdot,\cdot]_\ff)$ is indeed a Lie algebra, and clearly $\dim(\ff) = \fU_\mu$.  The filtration on $\ff$ is inherited: $\ff^i := \ff \cap \fg^i$. Since $\phi_0 \in \ker(\Box) = \ker(\partial) \cap \ker(\partial^*)$, then $\kappa = \phi_0$ is clearly normal, and $\sfZ(\mu) > 0$ guarantees regularity.  Thus, $(\ff;\fg,\fp)$ is an algebraic model, which is clearly maximal with respect to the aforementioned partial order.
 
 \begin{prop} \label{P:canonical}
 Use notations as above with $G$ simple.  Suppose that $\rnk(G) \geq 2$, and exclude $G = A_2$ and $(G,P) = (B_2,P_1), (B_2,P_{1,2})$. Then $w(-\lambda) \in \Delta^-$ for any $w \in W^\fp(2)$.  For $\mu = -w\bullet \lambda$ with $\sfZ(\mu) > 0$, the canonical curved model of type $(\fg,\fp,\bbV_\mu)$ exists, and so $\fS_\mu = \fU_\mu$.  Moreover, any regular, normal parabolic geometry $(\cG \to M, \omega)$ of type $(G,P,\bbV_\mu)$ with $\dim(\inf(\cG,\omega)) = \fS_\mu$ is locally homogeneous about any $u \in \cG$ with $\kappa_H(u) \neq 0$.
 \end{prop}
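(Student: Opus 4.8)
My plan is to separate the statement into its three assertions — the combinatorial claim $w(-\lambda)\in\Delta^-$, the existence of the canonical curved model together with the equality $\fS_\mu=\fU_\mu$, and the local homogeneity conclusion — and to observe that only the first requires genuine work, the other two being formal consequences of machinery already in place. Indeed, once $w(-\lambda)\in\Delta^-$ and $\sfZ(\mu)>0$ are known, Definition \ref{D:canonical} applies verbatim and, via \cite[Lemma 4.1.1]{KT2017}, produces a genuine algebraic model $(\ff;\fg,\fp)$ with $\dim(\ff)=\fU_\mu$; by \cite[Lemma 4.1.4]{KT2017} this is realized by a homogeneous geometry of type $(G,P,\bbV_\mu)$, so $\fS_\mu\geq\fU_\mu$, while for every such geometry \eqref{E:s-Kh} forces $\dim(\inf(\cG,\omega))=\dim(\fs(u))\leq\dim(\fa^{\kappa_H(u)})\leq\fU_\mu$ at any $u$ with $\kappa_H(u)\neq0\in\bbV_\mu$, giving $\fS_\mu\leq\fU_\mu$ and hence equality. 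The closing clause is then immediate from Lemma \ref{L:hom} applied to the $G_0$-invariant set $\cO=\bbV_\mu\setminus\{0\}$, since $\fS_\cO=\fS_\mu=\fU_\mu=\fU_\cO$.

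The substance therefore lies in proving $w(-\lambda)\in\Delta^-$ for every $w\in W^\fp(2)$. Since $G$ is simple, $\lambda=\theta$ is the highest root, so $w(-\lambda)=-w(\theta)$ and the claim is equivalent to $w(\theta)\in\Delta^+$, i.e.\ to $\theta\notin\mathrm{Inv}(w):=\{\alpha\in\Delta^+:w(\alpha)\in\Delta^-\}$. Writing $w=\sigma_j\sigma_k$ (necessarily with $j\neq k$, since a length-two word uses distinct generators), a direct computation of the length-two inversion set gives $\mathrm{Inv}(w)=\{\alpha_k,\,\sigma_k(\alpha_j)\}$, where $\sigma_k(\alpha_j)=\alpha_j-c_{jk}\alpha_k$. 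Thus $w(-\lambda)\in\Delta^-$ fails precisely when $\theta=\alpha_k$ or $\theta=\sigma_k(\alpha_j)$.

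I would then dispose of these two possibilities by inspecting highest roots. The equality $\theta=\alpha_k$ forces $\theta$ to be simple, which occurs only for $\rnk(G)=1$ and is excluded. The equality $\theta=\sigma_k(\alpha_j)=\alpha_j-c_{jk}\alpha_k$ forces $\theta$ to be supported on the two nodes $\{j,k\}$ with coefficient $1$ on $\alpha_j$; scanning the highest roots of the simple types shows this can happen only in rank two, and only for $A_2$ (where $\theta=\alpha_1+\alpha_2=\sigma_2(\alpha_1)=\sigma_1(\alpha_2)$) and for $B_2$ (where, in our ordering, $\theta=\alpha_1+2\alpha_2=\sigma_2(\alpha_1)$), but not for $G_2$ (whose highest root has both coefficients $\geq2$) nor in rank $\geq3$ (where the highest root is supported on at least three nodes). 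Matching the offending word $w$ against the requirement $j\in I_\fp$ then yields exactly the excluded triples: all of $A_2$, together with $(B_2,P_1)$ and $(B_2,P_{1,2})$ (the cases with $1\in I_\fp$, for which the bad word $(12)$ lies in $W^\fp(2)$), while $(B_2,P_2)$ and every rank $\geq3$ case survive. This completes the claim.

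The only genuine obstacle is the bookkeeping of the previous paragraph: one must confirm \emph{uniformly across types} that the highest root is never of the form $\alpha_j-c_{jk}\alpha_k$ outside the three listed exceptions, and then verify that the exceptional words actually belong to $W^\fp(2)$ precisely for the stated parabolics and no others. This is a finite check against the standard highest-root data rather than a conceptual difficulty; everything else is an assembly of Definition \ref{D:canonical}, the universal bound \eqref{E:s-Kh}, Lemma \ref{L:hom}, and the realizability results of \cite{KT2017}.
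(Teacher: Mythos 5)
Your proposal is correct, and its overall assembly is the same as the paper's: existence plus $\fS_\mu=\fU_\mu$ from the realizability machinery of \cite{KT2017}, and local homogeneity from Lemma \ref{L:hom} applied to $\cO=\bbV_\mu\setminus\{0\}$. The difference is one of self-containedness: the paper's proof is a one-line citation of \cite[Lemma 4.1.2]{KT2017}, which subsumes both the combinatorial claim $w(-\lambda)\in\Delta^-$ and the construction of the model, whereas you reconstruct the combinatorial part from scratch. Your inversion-set argument is the right one and checks out: for $w=\sigma_j\sigma_k$ reduced one has $\mathrm{Inv}(w)=\{\alpha_k,\ \sigma_k(\alpha_j)\}$ (indeed $w(\alpha_k)=-\sigma_j(\alpha_k)<0$ and $w(\sigma_k(\alpha_j))=-\alpha_j<0$, and $\ell(w)=2$ bounds the inversion set), so $w(-\lambda)\in\Delta^-$ fails exactly when the highest root $\theta$ equals $\alpha_k$ (rank one) or $\alpha_j-c_{jk}\alpha_k$ (support of size two with coefficient $1$ on $\alpha_j$), and scanning highest roots recovers precisely the exclusions $A_2$, $(B_2,P_1)$, $(B_2,P_{1,2})$ while clearing $(B_2,P_2)$, $G_2$, and all rank $\geq 3$ cases. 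What your route buys is a transparent, verifiable explanation of \emph{why} those particular triples are excluded, at the cost of re-deriving material the paper outsources; what the paper's route buys is brevity. One minor point worth making explicit in your second paragraph: the realizability step also needs $\im(\kappa_H)\subset\bbV_\mu$ for the geometry produced by \cite[Lemma 4.1.4]{KT2017}, which holds because $\kappa=\phi_0$ is harmonic, $\kappa_H$ is $P$-equivariant, and $\bbV_\mu$ is a $P$-submodule of $H_2(\fp_+,\fg)^1$ (as $P_+$ acts trivially there); this is implicit in your argument but should be stated.
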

 
 \begin{proof}
 This follows from \cite[Lemma 4.1.2]{KT2017} and local homogeneity follows from Lemma \ref{L:hom}.
 \end{proof}
 
 Under the hypotheses of Proposition \ref{P:canonical}, our problem of classifying submaximally symmetric models relative to $\bbV_\mu$ becomes that of classifying algebraic models $(\ff;\fg,\fp)$ with $0 \neq \kappa_H \in \bbV_\mu$ and $\dim(\ff) = \fU_\mu$.  From Proposition \ref{P:algCC}, $\fs = \tgr(\ff) \subseteq \fa^{\kappa_H}$.  The equality $\dim(\ff) = \fU_\mu = \dim(\fa^{\phi_0})$ forces $\fs = \fa^{\kappa_H}$ with $[\kappa_H] = g_0 \cdot [\phi_0]$ for some $g_0 \in G_0$.  Using the induced  $G_0$-action on $\ff$, we may henceforth assume that $[\kappa_H] = [\phi_0]$, and so $(\ff,[\cdot,\cdot]_\ff)$ satisfies
 \begin{align} \label{E:sa}
 \fs = \tgr(\ff) = \fa^{\phi_0}.
 \end{align}
 
 In summary, to establish Theorem \ref{T:main}, it suffices to answer the following question:
 \begin{framed}
 \begin{problem}  \label{P:problem} 
 When $\rnk(G) \geq 3$ or $(G,P) = (G_2,P_2)$, classify algebraic models $(\ff;\fg,\fp)$ with $0 \neq \kappa_H \in \bbV_\mu$ satisfying \eqref{E:sa}, up to the action by the residual subgroup $\Stab([\phi_0]) \ltimes P_+ \leq P$.)
 \end{problem}
 \end{framed}
 
 Over $\bbC$, we will show that the canonical curved model is the unique solution to Problem \ref{P:problem}.

 \subsection{The $(G_2,P_2)$ case} 
 \label{S:G2}
 
 We first answer Problem \ref{P:problem} in the $(G,P) = (G_2,P_2)$ case.  For $G_2$, recall:
 \begin{align}
 \Gdd{ww}{}, \quad (c_{ij}) = \begin{psmallmatrix} 2 & -1\\ -3 & 2 \end{psmallmatrix}, \quad \begin{cases}
 \alpha_1 = 2\lambda_1 - \lambda_2,\\
 \alpha_2 = -3\lambda_1 + 2\lambda_2
 \end{cases}, \quad \begin{cases}
 \lambda_1 = 2\alpha_1 + \alpha_2,\\
 \lambda_2 = 3\alpha_1 + 2\alpha_2
 \end{cases}.
 \end{align}
 Let $\{ \sfZ_1, \sfZ_2 \}$ be the dual basis to the simple roots $\{ \alpha_1, \alpha_2 \}$.  The highest weight is $\lambda = \lambda_2 = 3\alpha_1 + 2\alpha_2$.  The root diagram is given in Figure \ref{F:G2P}.
 Let $e_\alpha$ be a root vector for the root $\alpha \in \Delta$.
 
 \begin{center}
 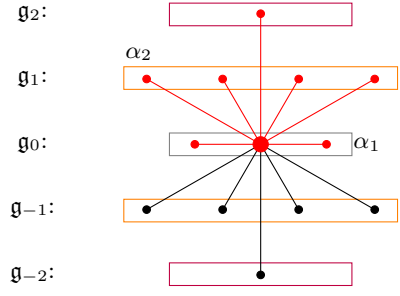
\begin{figure}[h]
 $\begin{array}{c}
 \begin{tikzpicture}[scale=1,baseline=-5pt]
  \draw[gray,yshift=49pt,dashed] (-1.2,0.15) rectangle (1.2,-0.15); 
  \draw[gray,yshift=25pt,dashed] (-1.8,0.15) rectangle (1.8,-0.15); 
  \draw[gray,dashed] (-1.2,0.15) rectangle (1.2,-0.15); 
  \draw[gray,yshift=-25pt,dashed] (-1.8,0.15) rectangle (1.8,-0.15); 
  \draw[gray,yshift=-49pt,dashed] (-1.2,0.15) rectangle (1.2,-0.15); 
    \draw[red] ( 0  , 1.732) -- (0,0);
    \filldraw[red] ( 0  ,  1.732) circle (0.05);
    \draw[red] (0, 0) -- (1.5,  0.866); 
    \filldraw[red] ( 1.5,  0.866) circle (0.05);
    \draw[red] (0, 0) -- (0.5,  0.866); 
    \filldraw[red] ( 0.5,  0.866) circle (0.05);
    \draw[red] (0, 0) -- (-0.5,  0.866); 
    \filldraw[red] ( -0.5,  0.866) circle (0.05);
    \draw[red] (0, 0) -- (-1.5,  0.866); 
    \node at (-1.6,1.2) {{\footnotesize $\alpha_2$}};
    \filldraw[red] ( -1.5,  0.866) circle (0.05);
    \draw[red] (0, 0) -- (0.866,  0); 
    \node at (1.4,0) {{\footnotesize $\alpha_1$}};
    \filldraw[red] (0.866,  0) circle (0.05);
    \draw (0, 0) -- (1.5,  -0.866); 
    \filldraw ( 1.5,  -0.866) circle (0.05);
    \draw ( 0  , -1.732) -- (0,0);
    \filldraw ( 0  ,  -1.732) circle (0.05);
    \draw (0, 0) -- (-1.5,  -0.866); 
    \filldraw ( -1.5,  -0.866) circle (0.05);
    \draw (0, 0) -- (-0.5,  -0.866); 
    \filldraw ( -0.5,  -0.866) circle (0.05);
    \draw (0, 0) -- (0.5,  -0.866); 
    \filldraw ( 0.5,  -0.866) circle (0.05);
    \draw[red] (0, 0) -- (-0.866,  0); 
    \filldraw[red] (-0.866,  0) circle (0.05);   
    \filldraw[color=red] (0,0) circle (0.1); 
    \node at (-3,1.732) {{\footnotesize $\fg_2$:}};
    \node at (-3,0.866) {{\footnotesize $\fg_1$:}};
    \node at (-3,0) {{\footnotesize $\fg_0$:}};
    \node at (-3,-0.866) {{\footnotesize $\fg_{-1}$:}};
    \node at (-3,-1.732) {{\footnotesize $\fg_{-2}$:}};
 \end{tikzpicture}
 \end{array}$
 \caption{$G_2$ root diagram with grading associated to $P_2$}
 \label{F:G2P}
 \end{figure}
 \end{center}
 
 We have $\sfZ = \sfZ_2$, which induces the grading $\fg = \fg_{-2} \op ... \op \fg_2$.  Moreover,
 \begin{align}
 \begin{array}{l} 
 \fg_0 = \langle \sfZ, h_{\alpha_1}, e_{\alpha_1}, e_{-\alpha_1} \rangle \cong \fgl_2,\\
 \fg_{-1} = \fg_{-\alpha_2} \op \fg_{-\alpha_1 - \alpha_2} \\
 \qquad\quad \op\, \fg_{-2\alpha_1 - \alpha_2} \op \fg_{-3\alpha_1 - \alpha_2},\\
 \fg_{-2} = \fg_{-3\alpha_1 - 2\alpha_2}, \\
 \end{array} \quad 
 \begin{array}{l}
 w = (21) \in W^\fp(2), \\
 \mu = -w \bullet \lambda = -7\lambda_1 + 4\lambda_2 = -2\alpha_1 + \alpha_2,\\
 H^2(\fg_-,\fg) = \Gdd{wx}{7,-4}, \\
 \phi_0 = e_{\alpha_2} \wedge e_{\alpha_1+\alpha_2} \otimes e_{-3\alpha_1-\alpha_2},\\
 \fa = \fg_- \op \fa_0, \quad \fa_0 = \langle \sfZ_1 + 2\sfZ_2 \rangle \op \fg_{-\alpha_1}.
 \end{array}
 \end{align}
 Let us now classify algebraic models $(\ff;\fg,\fp)$ with $\tgr(\ff) = \fa$. Let $T \in \ff^0$ with $\tgr_0(T) = \sfZ_1 + 2\sfZ_2$.  Since $(\sfZ_1 + 2\sfZ_2)(\alpha) \neq 0$ for all $\alpha \in \Delta(\fp_+)$, we use the $P_+$-action to normalize to $T = \sfZ_1 + 2\sfZ_2 \in \ff^0$.
 
 Defining $\fa^\perp = \langle \sfZ_2 \rangle \op \fg_{\alpha_1} \op \fg_+$, let $\fd \in \fa^* \otimes \fa^\perp$ be the associated deformation map (of positive degree).  The decomposition $\fg = \fa \op \fa^\perp$ is $\ad_T$-invariant, so $T \cdot \fd = 0$ by Lemma \ref{L:def-T-inv}, i.e.\ $\fd$ is a sum of weight vectors for weights that are multiples of $\mu = -2\alpha_1 + \alpha_2$.  The weights of $\fa^*$ and $\fa^\perp$ agree, and they are both:
 \begin{align} \label{E:a-wts}
 0, \quad \alpha_1, \quad \alpha_2, \quad \alpha_1 + \alpha_2, \quad 2\alpha_1 + \alpha_2, \quad 3\alpha_1 + \alpha_2, \quad 3\alpha_1 + 2\alpha_2.
 \end{align}
 Since $\mu$ has coefficients with respect to $\{ \alpha_1, \alpha_2 \}$ of {\em opposite sign}, there is no sum of two weights in \eqref{E:a-wts} that is: (i) a multiple of $\mu$, and (ii) has positive degree.  Thus, $\fd = 0$, and $\ff = \fa$ as filtered subspaces of $\fg$.

Now consider curvature $\kappa \in \ker(\partial^*)^1 \subset \bigwedge^2 \fp_+ \otimes \fg$.  Since $T \cdot \kappa = 0$ by Proposition \ref{P:algCC}, then we are interested in weights $\sigma$ (of 2-cochains) that are multiples of $\mu$:
 \begin{align} \label{E:sigmu}
 \sigma = r \mu = \alpha + \beta + \gamma, \qbox{where } \alpha, \beta \in \Delta(\fp_+) \mbox{ are distinct},\quad \gamma \in \Delta \cup \{ 0 \}, \quad r\geq 1.
 \end{align}
 (We have $r \geq 1$ since regularity and the final statement in \S \ref{S:parabolic} imply $\sfZ(\sigma) \geq \sfZ(\mu) \geq 1$.) Recall that $\lambda = \lambda_2 = 3\alpha_1 + 2\alpha_2$ and note that $-\lambda \leq \gamma < \sigma = r\mu$.  Then $-3 \leq \sfZ_1(-\lambda) \leq \sfZ_1(\sigma) = r \sfZ_1(\mu) = -2r$, and so $r \leq \frac{3}{2}$.  However, $\sfZ(\mu) = \sfZ_2(\mu) = 1$ and $\sigma$ has integer coefficients in the simple root basis, so the only possibility is $r = 1$.  Since $H^2(\fg_-,\fg) \cong \bbV_\mu$ is a $\fg_0$-irrep, then $\kappa$ must be a nonzero multiple of $\phi_0$.  Use $\Ad_{\exp(t\sfZ)}$ to rescale over $\bbC$ so that $\kappa = \phi_0$, so we obtain the canonical curved model.

Over $\bbR$, we may rescale to $\kappa = \pm \phi_0$.  Let us study the action by $G_0 \cong \GL(2,\bbR)$ more concretely.  Let $\{ x, y \}$ be the standard basis of $\bbR^2$, and $\{ \omega, \eta \}$ the dual basis.  Then $\fg_{-1} \cong S^3 \bbR^2$ as $G_0$-modules and we can identify 
 \[
 (e_{-\alpha_2},e_{-\alpha_1-\alpha_2},e_{-2\alpha_1-\alpha_2},e_{-3\alpha_1-\alpha_2}) = (x^3,x^2y,xy^2,y^3).
 \]
 We regard $\phi_0$ as a multiple of $\omega^3 \wedge \omega^2 \eta \otimes y^3$.  Hence, $A = \diag(a,b) \in G_0$ acts as $\phi_0  \mapsto a^{-5} b^2 \phi_0$, so taking $(a,b) = (-1,1)$ induces $\phi_0 \mapsto -\phi_0$.  Again, we obtain the canonical curved model.

 The underlying structures for regular, normal parabolic geometries of type $(G_2,P_2)$ are called {\sl $G_2$-contact geometries}.  See \cite{The2018} for $\kappa_H$ and a coordinate realization of a submaximally symmetric structure given in \cite[Table 8]{The2018}.  By uniqueness proved above, this corresponds to the canonical curved model.  We have shown:
 
 \begin{prop} \label{P:G2ct}
 There is a locally unique (complex or real) $G_2$-contact geometry that is submaximally symmetric ($\fS = 7$) about any point where harmonic curvature is nonvanishing.
 \end{prop}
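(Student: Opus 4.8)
The plan is to read off Proposition \ref{P:G2ct} as the geometric consequence of the algebraic classification carried out above. By Proposition \ref{P:canonical} we already know that $\fS_\mu = \fU_\mu = 7$ and that any submaximally symmetric geometry of type $(G_2,P_2,\bbV_\mu)$ is locally homogeneous about points where $\kappa_H \neq 0$; hence near such a point it is encoded by an algebraic model $(\ff;\fg,\fp)$ satisfying \eqref{E:sa}, i.e.\ $\tgr(\ff) = \fa$. So the whole content reduces to Problem \ref{P:problem}: showing that, up to the residual $\Stab([\phi_0]) \ltimes P_+$-action, there is exactly one such model, namely the canonical curved model.

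First I would pin down the filtered deformation. Having normalized $T = \sfZ_1 + 2\sfZ_2 \in \ff^0$ via the $P_+$-action, the splitting $\fg = \fa \op \fa^\perp$ is $\ad_T$-invariant, so Lemma \ref{L:def-T-inv} forces the deformation map $\fd$ to be a sum of weight vectors whose weights are $\bbZ$-multiples of $\mu = -2\alpha_1 + \alpha_2$. Inspecting the weights \eqref{E:a-wts} of $\fa^* \otimes \fa^\perp$ and using that $\mu$ has coefficients of \emph{opposite} sign in the $\{\alpha_1,\alpha_2\}$ basis, no pairwise sum of these weights can be simultaneously a multiple of $\mu$ and of positive degree; hence $\fd = 0$ and $\ff = \fa$ as filtered subspaces of $\fg$.

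Next I would classify the curvature. Since $T \cdot \kappa = 0$ by Proposition \ref{P:algCC}, $\kappa$ is supported on cochain weights $\sigma = r\mu$, and the degree/weight bookkeeping in \eqref{E:sigmu} (bounding $r$ via $\sfZ_1$ against $-\lambda$ and then via $\sfZ$) forces $r = 1$. As $H^2(\fg_-,\fg) \cong \bbV_\mu$ is $\fg_0$-irreducible, $\kappa$ is a nonzero multiple of $\phi_0$. Over $\bbC$, rescaling by $\Ad_{\exp(t\sfZ)}$ gives $\kappa = \phi_0$; over $\bbR$ one first reaches $\kappa = \pm\phi_0$, and then the explicit $\GL(2,\bbR)$-action coming from $\fg_{-1} \cong S^3\bbR^2$ (with $\diag(-1,1)$ sending $\phi_0 \mapsto -\phi_0$) identifies the two signs. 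In both cases the unique solution to Problem \ref{P:problem} is the canonical curved model.

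Finally I would translate this unique algebraic model back to geometry: by the reconstruction result \cite[Lemma 4.1.4]{KT2017}, the maximal model $\ff = \fa$ with $\kappa = \phi_0$ is realized by a locally homogeneous parabolic geometry with $\inf(\cG,\omega) \cong \ff$, which is the asserted unique submaximally symmetric $G_2$-contact structure near any point with $\kappa_H \neq 0$. I expect the genuine obstacle to be the vanishing of $\fd$: everything downstream is forced once one knows that no nontrivial filtered deformation survives, and that rigidity hinges entirely on the opposite-sign feature of $\mu$, which is special to this $(G_2,P_2)$ geometry and is exactly what fails in the non-unique rank-two examples (e.g.\ the $(G_2,P_1)$ case of Example \ref{E:235}).
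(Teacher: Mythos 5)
Your proposal is correct and follows essentially the same route as the paper: reduce via Proposition \ref{P:canonical} to Problem \ref{P:problem}, normalize $T = \sfZ_1 + 2\sfZ_2 \in \ff^0$ by the $P_+$-action, kill the deformation map $\fd$ using Lemma \ref{L:def-T-inv} and the opposite-sign coefficients of $\mu$ against the weight list \eqref{E:a-wts}, then pin down $\kappa$ as a multiple of $\phi_0$ by the weight bound forcing $r=1$, with the real case handled by the explicit $\GL(2,\bbR)$-action $\phi_0 \mapsto a^{-5}b^2\phi_0$. Your closing observation that the rigidity hinges on the opposite-sign feature of $\mu$ correctly identifies the key mechanism distinguishing $(G_2,P_2)$ from the non-unique rank-two cases.
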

 
 \subsection{Preparation for the general case and the twistor simplification}
 \label{S:cor-twist}
 
 The $\rnk(G) \geq 3$ case for Problem \ref{P:problem} is treated in a similar spirit to the $(G_2,P_2)$ case, but will require some further preparations.  We will need more details about $\mu = -w\bullet \lambda$ and $\tgr(\ff) = \fa^{\phi_0}$.
 First, observe that for $w = (jk)$, we have $-w\bullet 0 = \alpha_j + \sigma_j(\alpha_k)$ by \eqref{E:mu} and \eqref{E:phi0}.  If $\lambda = \sum_{i=1}^\ell r_i \lambda_i$, then \eqref{E:mu} becomes:
 \begin{align} \label{E:mu1}
 \mu &= -\lambda + (r_j+1)\alpha_j + (r_k+1)(\alpha_k - c_{kj} \alpha_j).
 \end{align}
 Second, from \cite[Thm. 3.3.3]{KT2017}, $\tgr(\ff) = \fa^{\phi_0}$ is the Tanaka prolongation of:
 \begin{align} \label{E:a0}
 \fann(\phi_0) = \ker(\mu) \op \bigoplus_{\gamma \in \Delta(\fg_{0,\leq 0})} \fg_\gamma\quad \subset\quad \fg_0.
 \end{align}
 This is the direct sum of $\ker(\mu) := \{ h \in \fh : \mu(h) = 0 \}$ and root spaces for the roots 
 \begin{align}
 \Delta(\fg_{0,\leq 0}) := \{ \alpha \in \Delta(\fg_0) : \sfZ_{J_\mu}(\alpha) \leq 0 \},
 \end{align}
 where $\sfZ_{J_\mu} := \sum_{i \in J_\mu} \sfZ_i$ is a  {\sl secondary grading} with respect to the set
 \begin{align} \label{E:Jmu}
 J_\mu := \{ i \in \{ 1,..., \ell \} \backslash I_\fp : \langle \mu, \alpha_i^\vee \rangle \neq 0 \}.
 \end{align}
 In \cite{KT2017}, the weight $\mu$ was encoded on a Dynkin diagram by inscribing over corresponding nodes the coefficients of $-\mu$ with respect to $\{ \lambda_i \}$.  The set $J_\mu$ corresponds to uncrossed nodes with a nonzero coefficient.

 \begin{example}
 Consider $(G,P) = (E_8,P_8)$.  Here, $\sfZ = \sfZ_8$, $\lambda = \lambda_8$, $w = (87) \in W^\fp(2)$, and $\fg_0 \cong \bbC \op E_7$.  Applying Kostant's theorem, we find that 
 \[
 H^2_+(\fg_-,\fg) \cong \bbV_\mu =  \Edd{wwwwwwww}{0,0,0,0,0,1,1,-4}
 \] with $\mu = -\lambda_6 - \lambda_7 + 4\lambda_8$, and so $J_\mu = \{ 6,7 \}$.  (Also, $\sfZ(\mu) = 1$.)  See \cite[\S 3.3]{KT2017} for more examples.
 \end{example}

 According to \cite{Cap2005}, any parabolic geometry can be lifted to a {\sl correspondence space}, and conversely a parabolic geometry may be descended to a {\sl twistor space} if a suitable curvature condition is satisfied.  (The latter amounts to viewing the given geometry of type $(G,Q)$ as a geometry of type $(G,P)$, where $Q \subset P \subset G$.)  These are categorical constructions, so symmetries are naturally mapped to symmetries.  We will not recall here the general theory developed in \cite{Cap2005}, but only summarize various results from \cite[\S 3.5]{KT2017} in order to emphasize a ``twistor simplification'' \eqref{E:twistor} relevant for our purposes.  The main reason for doing so is to assert \eqref{E:f1}, which facilitates the classification of filtered sub-deformations of $\fa^{\phi_0}$ in \S \ref{S:gen-pf}.
 
 Under the assumption $\kappa_H \in \bbV_\mu$, one may always descend to a {\sl minimal} twistor space.  Concretely, if $\mu = -w\bullet \lambda$, where $w = (jk) \in W^\fq(2)$, then \cite[Prop.3.5.1 \& Cor.3.5.2]{KT2017} indicates that we may instead view a given $(G,Q)$ geometry as a $(G,P)$ geometry, where
 \begin{align} \label{E:twistor}
 P = \begin{cases}
 P_j, & \mbox{if } c_{jk} < 0;\\
 P_{j,k}, & \mbox{if } c_{jk} = 0.
 \end{cases}
 \end{align}
 In \cite[Thm.3.5.4]{KT2017}, we showed that the Lie algebra structure of $\fa^{\phi_0}$ is unchanged with respect to the above change of parabolics, in spite of the grading change, cf.\ \cite[Example 3.5.5]{KT2017}.
 
 Normality of the geometry is preserved in passing to a correspondence or twistor space, but a priori regularity is not.
 
 \begin{example}
 For regular, normal geometries of type $(G_2,P_2)$, we have $\lambda = \lambda_2$ and $\sfZ = \sfZ_2$. Then $w = (21) \in W^\fp(2)$ yields $\mu = -7\lambda_1 + 4\lambda_2 = -2\alpha_1 + \alpha_2$, so $\sfZ(\mu) = 1$.  Viewed on the correspondence space as a $(G_2,P_{1,2})$ geometry, which has grading element $\widetilde\sfZ := \sfZ_1 + \sfZ_2$, the corresponding harmonic curvature would take values in a module with degree $\widetilde\sfZ(\mu) = -1$, i.e. regularity is not preserved.
 \end{example}
 
 Despite regularity not being preserved when passing upwards to a correspondence space, let us consider the passage downwards to the minimal twistor space.  In the {\em simple} setting, preservation of regularity upon such descent can be observed a posteriori through the tables compiled in \cite[Appendix C]{KT2017}.  We now give a uniform proof of this:

 \begin{lemma} 
 Let $\fg$ be a complex simple Lie algebra of $\rnk(\fg) \geq 2$ with highest weight $\lambda$, $\fq \subset \fg$ a parabolic subalgebra.  Fix $w = (jk) \in W^\fq(2)$ and $\mu = -w\bullet \lambda$.  Define $\fq \subset \fp \subset \fg$, where the parabolic subalgebra $\fp$ is defined by $I_\fq \supset I_\fp = \begin{cases} 
 \{ j \}, & c_{kj} < 0;\\
 \{ j,k \}, & c_{kj} = 0.
 \end{cases}$ Let $\sfZ,\bar\sfZ$ be the grading elements corresponding to $\fq,\fp$ respectively.  If we have $\sfZ(\mu) > 0$, then $\bar\sfZ(\mu) > 0$. 
 \end{lemma}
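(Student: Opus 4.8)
The plan is to pass to the simple-root basis and exploit the hypothesis $\sfZ(\mu)>0$ coefficient by coefficient. Since $\fg$ is simple, Kostant's theorem forces $\lambda$ to be the highest root, so writing $\lambda=\sum_i a_i\alpha_i$ all the marks $a_i$ are positive integers. Writing $\mu=\sum_i n_i\alpha_i$ and setting $r_i:=\langle\lambda,\alpha_i^\vee\rangle$, formula \eqref{E:mu1} gives $n_i=-a_i<0$ for $i\neq j,k$, together with $n_k=-a_k+r_k+1$ and $n_j=-a_j+r_j+1-(r_k+1)c_{kj}$. As $\sfZ=\sum_{i\in I_\fq}\sfZ_i$ and $\bar\sfZ=\sum_{i\in I_\fp}\sfZ_i$ read off simple-root coefficients, we have $\sfZ(\mu)=\sum_{i\in I_\fq}n_i$ and $\bar\sfZ(\mu)=\sum_{i\in I_\fp}n_i$, with $I_\fp\subseteq I_\fq$. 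The case $c_{kj}=0$ (so $I_\fp=\{j,k\}$) is immediate: $\bar\sfZ(\mu)=n_j+n_k=\sfZ(\mu)+\sum_{i\in I_\fq\setminus\{j,k\}}a_i\geq\sfZ(\mu)>0$, since the discarded coefficients are exactly the negative numbers $-a_i$.

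The substantive case is $c_{kj}<0$, where $I_\fp=\{j\}$ and $\bar\sfZ(\mu)=n_j=-a_j+r_j+1+(r_k+1)|c_{kj}|$. Here the naive expectation $\bar\sfZ(\mu)\geq\sfZ(\mu)$ is \emph{false}: the single coefficient $n_k$ can be positive (e.g.\ at the extremal nodes of types $A$ and $C$), so one cannot simply discard the $i\neq j$ terms of $\sfZ(\mu)$. This is the main obstacle. I would split on the sign of $n_k$. If $n_k\leq 0$, then every coefficient $n_i$ with $i\in I_\fq\setminus\{j\}$ is $\leq 0$ (as $n_i=-a_i<0$ for $i\neq k$), whence $n_j=\sfZ(\mu)-\sum_{i\in I_\fq\setminus\{j\}}n_i\geq\sfZ(\mu)>0$.

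It remains to treat $n_k>0$, which I would handle uniformly (and without even invoking $\sfZ(\mu)>0$) via the mark identity $r_k=2a_k-\sum_{i\neq k}a_i|c_{ik}|$, obtained from $r_k=\langle\lambda,\alpha_k^\vee\rangle=\sum_i a_i c_{ik}$ together with $c_{kk}=2$ and $c_{ik}\leq 0$ for $i\neq k$. The inequality $n_k>0$ is equivalent to $r_k\geq a_k$, and the identity then forces $a_k\geq\sum_{i\neq k}a_i|c_{ik}|\geq a_j|c_{jk}|\geq a_j$, so that $r_k\geq a_k\geq a_j$. Substituting into the formula for $n_j$ and using $r_j\geq 0$ and $|c_{kj}|\geq 1$ yields $n_j\geq -a_j+1+(a_j+1)=2>0$. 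Hence $\bar\sfZ(\mu)=n_j>0$ in every case. I expect the only bookkeeping subtlety to be the distinction between the two Cartan integers $c_{jk}$ and $c_{kj}$ at a multiple bond, but since each has absolute value at least $1$ the estimates above are unaffected.
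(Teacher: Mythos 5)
Your proof is correct, and it shares the same skeleton as the paper's up to a point: both start from \eqref{E:mu1}, observe that the simple-root coefficients $\sfZ_i(\mu)=\sfZ_i(-\lambda)<0$ for $i\neq j,k$, and dispose immediately of the cases where the only coefficients retained in $\bar\sfZ(\mu)$ dominate those in $\sfZ(\mu)$. Where you genuinely diverge is the substantive sub-case $c_{jk}<0$, $k\in I_\fq$, with $\sfZ_k(\mu)$ possibly positive. The paper handles this by consulting the table of highest roots of the simple Lie algebras: it shows $r_k>0$ forces $\fg$ to be of type $A$, $C$, or to carry a contact grading at node $k$ (whence $\sfZ_k(\mu)=0$), and then treats types $A$ and $C$ by explicit computation. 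You instead avoid the classification entirely via the mark identity $r_k=2a_k-\sum_{i\neq k}a_i|c_{ik}|$ (where $\lambda=\sum_i a_i\alpha_i$), deducing from $\sfZ_k(\mu)>0$ that $r_k\geq a_k\geq a_j$ and hence $\sfZ_j(\mu)\geq 2$ uniformly. I checked the chain of inequalities ($n_k>0\Leftrightarrow r_k\geq a_k$ by integrality, $a_k\geq a_j|c_{jk}|\geq a_j$, and $n_j\geq -a_j+1+(r_k+1)\geq 2$) and it is sound; it even reproduces the paper's numerical outputs in types $A$ and $C$. Your argument is arguably cleaner and more robust, since it needs no type-by-type verification and no appeal to the contact-grading fact; what it gives up is the explicit structural information (which nodes $k$ can have $r_k>0$) that the paper's route makes visible. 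Either way the lemma is established.
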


 \begin{proof} 
 From \eqref{E:mu1}, we have $\sfZ_s(\mu) = \sfZ_s(-\lambda) < 0$ for any $s \neq j,k$.  (Recall that all coefficients of  $\lambda$ in the simple root basis are {\em positive}.)  We have $j \in I_\fq$.   Suppose that (i) $c_{jk} = 0$ (hence, $k \in I_\fq$), or (ii) $c_{jk} < 0$ and $k \not\in I_\fq$.  In either case, $0 < \sfZ(\mu) = \bar\sfZ(\mu) + \sfZ_{I_\fq \backslash \{ j,k \}} (\mu)$, where $\sfZ_{I_\fq \backslash \{ j,k \}} := \sum_{s \in I_\fq \backslash \{ j,k \}} \sfZ_s$, so $\bar\sfZ(\mu) > 0$ since $\sfZ_{I_\fq \backslash \{ j,k \}}(\mu) < 0$.
  
 From \S\ref{S:can-submax}, it remains to consider the case $c_{jk} < 0$ and $k \in I_\fq$.  Then
  \begin{align} \label{E:Zk}
  0 < \sfZ(\mu) = \bar\sfZ(\mu) + \sfZ_k(\mu) + \sfZ_{I_\fq \backslash \{ j,k \}} (\mu)
  \end{align}
 From \eqref{E:mu1}, note that 
 \begin{align} \label{E:mu2}
 \sfZ_k(\mu) = \sfZ_k(-\lambda) + r_k+1.
 \end{align}
 If $r_k = 0$, then $\sfZ_k(\mu) \leq -1 + 0 + 1 = 0$.  As above, $\bar\sfZ(\mu) > 0$ and we are done.  So let us suppose that $r_k > 0$.  We can examine all such possibilities from knowledge of  the well-known highest roots of simple Lie algebras:
 \begin{align} \label{E:hw}
 \begin{array}{|c|c|c|c|c|c|c|c|c|c} \hline
  A_\ell\, (\ell \geq 1) & B_\ell \, (\ell \geq 3) & C_\ell\, (\ell \geq 2) & D_\ell \, (\ell \geq 4) & E_6 & E_7 & E_8 & F_4 & G_2\\  \hline
  \lambda_1 + \lambda_\ell & \lambda_2 & 2\lambda_1 & \lambda_2 & \lambda_2 & \lambda_1 & \lambda_8 & \lambda_1 & \lambda_2\\ \hline
 \end{array}
 \end{align}
 If $\fg$ is not type A or C, then from \eqref{E:hw}, we have $r_k = 1$, and it is well-known that $\sfZ_k$ yields a contact grading on $\fg$.  So $\sfZ_k(\lambda) = 2$, $\sfZ_k(\mu)  = -2 + 1 + 1 = 0$ from \eqref{E:mu2}, and $\bar\sfZ(\mu) > 0$ follows from \eqref{E:Zk}.  For the type A and C cases, we show that $\bar\sfZ(\mu) > 0$ independent of the hypothesis on $\sfZ(\mu)$:
 \begin{enumerate}
 \item Type C: We have $r_k = 2$, $k=1$, and $j=2$.  Since $\lambda = 2\lambda_1 = 2\alpha_1 + ... + 2\lambda_{\ell-1} + \lambda_\ell$, then from \eqref{E:mu1}, we have $\bar\sfZ(\mu) = \sfZ_j(\mu) = \sfZ_j(-\lambda) + r_j + 1 - (r_k+1) c_{kj} \geq -2 + 0 + 1 - 3 c_{kj} \geq 2$.
 \item Type A: We have $r_k=1$ and using a Dynkin diagram symmetry, we may assume $k=1$, so $j=2$.  Since $\lambda = \alpha_1 + ... + \alpha_\ell$, we have $\bar\sfZ(\mu) = \sfZ_j(\mu) = \sfZ_j(-\lambda) + r_j + 1 - (r_k+1) c_{kj} \geq -1 + 0 + 1 - 2 c_{kj} = 2$.
 \end{enumerate}
 \end{proof}
 
 Now, because of \cite[Prop.3.4.7]{KT2017} (see also \cite[Defn 3.4.1]{KT2017}), the ``twistor simplification'' implies that after moving to the larger parabolic subgroup indicated in \eqref{E:twistor} and the corresponding grading, we get $\fa^{\phi_0}_+ = 0$.  Combining this with \eqref{E:sa}, we obtain:
 \begin{align} \label{E:f1}
 \ff^1 = 0.
 \end{align}

 \subsection{Proof of the main theorem} 
 \label{S:gen-pf}
 
 Let us turn now to the proof of Theorem \ref{T:main}.

 \begin{lemma} \label{L:mu}
 Let $\fg$ be a complex simple Lie algebra with $\ell := \rnk(\fg) \geq 3$ and $\lambda$ its highest root.  Let $w = (jk) \in W^\fp(2)$ such that $\mu = -w\bullet \lambda$ satisfies $\sfZ(\mu) > 0$.  Then:
 \begin{enumerate}[label=(MU{{\arabic*}}$)$]
 \item[\mylabel{mu1}{(MU1)}] $\mu = \sum_{i=1}^\ell m_i \alpha_i$ has coefficients $m_i$ of opposite sign.  More precisely, $m_i < 0$, $\forall i \neq j,k$, and either $m_j > 0$ or $m_k > 0$.
 \item[\mylabel{mu2}{(MU2)}] $\exists H_0 \in \ker(\mu)$ with $f(H_0) \neq 0$ for all $f = \alpha + \beta$ with $(\alpha,\beta) \in \cR := \Delta^+ \times (\Delta^+ \cup \{ 0 \})$.
 \end{enumerate}
 \end{lemma}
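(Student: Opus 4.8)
The plan is to read (MU1) off the explicit formula \eqref{E:mu1}, and then to obtain (MU2) as a formal ``prime avoidance'' consequence of the sign pattern established in (MU1).

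First I would establish \ref{mu1}. Writing $\lambda = \sum_i a_i\alpha_i$ for the highest root, all marks $a_i$ are strictly positive. Since the correction terms in \eqref{E:mu1} involve only $\alpha_j$ and $\alpha_k$, for every $i \neq j,k$ one reads off $m_i = -a_i < 0$, which is the first assertion (and there is at least one such $i$ because $\ell \geq 3$ while $j \neq k$). For the second assertion, recall $\sfZ = \sum_{i \in I_\fp}\sfZ_i$ with $\sfZ_i(\alpha_s) = \delta_{is}$, so that $\sfZ(\mu) = \sum_{i \in I_\fp} m_i$, and that $j \in I_\fp$ for any $w = (jk) \in W^\fp(2)$. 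If $k \in I_\fp$, then the summands over $i \in I_\fp\setminus\{j,k\}$ are all negative, so $0 < \sfZ(\mu) \leq m_j + m_k$, forcing $m_j > 0$ or $m_k > 0$. If instead $k \notin I_\fp$ (so $c_{jk} < 0$), then every summand over $i \in I_\fp\setminus\{j\}$ is negative, whence $0 < \sfZ(\mu) \leq m_j$ and $m_j > 0$. In either case the coefficients of $\mu$ are of strictly mixed sign.

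For \ref{mu2} I would view $\ker(\mu) \subset \fh$ as a hyperplane and argue that it is not contained in the finite union $\bigcup_{(\alpha,\beta)\in\cR} \ker(\alpha+\beta)$. Over the infinite field $\bbC$, a subspace contained in a finite union of subspaces must lie inside one of them, so it suffices to check that $\ker(\mu) \neq \ker(f)$ for each $f = \alpha+\beta$, equivalently that no such $f$ is a scalar multiple of $\mu$. But each $f = \alpha+\beta$ with $\alpha \in \Delta^+$ and $\beta \in \Delta^+ \cup \{0\}$ is a nonzero sum of positive roots, hence has simple-root coefficients that are all $\geq 0$ and not all zero, whereas by \ref{mu1} the vector $\mu$ has coefficients of strictly mixed sign. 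No nonzero scalar can carry a mixed-sign vector to a nonnegative one, so $f \notin \bbC\mu$. Consequently a generic $H_0 \in \ker(\mu)$ avoids every $\ker(f)$ and satisfies $f(H_0) \neq 0$ for all $(\alpha,\beta) \in \cR$.

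Almost all of the content sits in \ref{mu1}: once the mixed-sign pattern is in hand, \ref{mu2} is a one-line avoidance argument. The only place demanding care -- the main (and mild) obstacle -- is the case split bounding $\sfZ(\mu)$ by $m_j$ or $m_j + m_k$, which rests on two points that must be invoked correctly: that $j \in I_\fp$ for every $w = (jk) \in W^\fp(2)$, and that all marks other than those at $\alpha_j,\alpha_k$ contribute negatively. The hypothesis $\ell \geq 3$ is precisely what guarantees the existence of an index $i \notin \{j,k\}$ (and hence a genuinely negative coefficient), ruling out the degenerate rank-two situation already treated separately in \S\ref{S:G2}.
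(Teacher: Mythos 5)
Your proposal is correct and follows essentially the same route as the paper: read the sign pattern of $\mu$ off \eqref{E:mu1} using positivity of the coefficients of the highest root and $\sfZ(\mu)>0$, then note that no $f=\alpha+\beta$ (having nonnegative simple-root coefficients) can be proportional to the mixed-sign $\mu$, so $\ker(\mu)$ is not covered by the finitely many hyperplanes $\ker(\mu)\cap\ker(f)$. The only cosmetic difference is that you invoke the finite-union-of-subspaces lemma over an infinite field where the paper takes the complement of finitely many hyperplanes in $\ker(\mu)$, and you spell out the case split on $k\in I_\fp$ that the paper leaves implicit.
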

 
 \begin{proof}
 From \eqref{E:mu1}, $\mu \equiv -\lambda\, \mod \{ \alpha_j, \alpha_k\}$.  Since $\fg$ is simple, then {\em all} coefficients of $\lambda$ with respect to the basis of simple roots $\{ \alpha_i \}_{i=1}^\ell$ are {\em strictly positive}.  Since $\ell \geq 3$, then $m_i = \sfZ_i(\mu) = \sfZ_i(-\lambda) < 0$ for all $i \neq j,k$.  At least one of $m_j = \sfZ_j(\mu)$ or $m_k = \sfZ_k(\mu)$ must be positive, since $\sfZ(\mu) > 0$ by hypothesis.

 Fix any $(\alpha,\beta) \in \cR$, and $f = \alpha+\beta > 0$, so by the first claim, $\mu$ is not a multiple of $f$.  Thus, $\ker(\mu)$ and $\ker(f)$ are distinct hyperplanes in $\fh$.  Their sum must be $\fh$, while $\Pi_f := \ker(\mu) \cap \ker(f)$ is a hyperplane in $\ker(\mu)$.  Since $\Delta^+$ is finite, the finite union $\bigcup_{(\alpha,\beta) \in \cR} \Pi_{\alpha + \beta}$ has non-empty complement in $\ker(\mu)$ (being the finite intersection of open sets $\fh\, \backslash\, \Pi_{\alpha+\beta}$).  Picking $H_0$ in this (open) complement completes the proof.
 \end{proof}
 
 Assume the hypotheses of Lemma \ref{L:mu}.  From the previous subsections, we have reduced our submaximal symmetry classification problem to studying algebraic models $(\ff;\fg,\fp)$ with 
 \begin{align}
 \im(\kappa_H) \subseteq \bbV_\mu, \quad
 \fs = \tgr(\ff) = \fa^{\phi_0} =: \fa,
 \end{align}
 where $\phi_0 \in \bbV_\mu$ is given by \eqref{E:phi0}.  We will classify these up to the action of $\Stab([\phi_0]) \ltimes P_+ \leq P$.  Moreover, we may assume the twistor simplification, which implies that $\ff^1 = 0$, where we have moved to the grading associated with the larger parabolic subgroup indicated in \eqref{E:twistor}. \\

 \underline{Step 1: Using the $P_+$-action, normalize $\ff$ so that $H_0 \in \ff^0$.}\\
 
 As in Lemma \ref{L:mu}, fix $H_0 \in \ker(\mu) \subset \fann(\phi_0) \subset \fa$.  Let $H \in \ff^0 \subset \fg$ with leading part $\tgr_0(H) = H_0$, so $H = H_0 + H_+$, where $H_+ := \fd(H_0) \in \fp_+$.  If $H_+ \neq 0$, let $0 \neq H_r := \tgr_r(H_+) \in \fg_r$ for some minimal $r \geq 1$.  Let us normalize $H$ via the $P_+$-action.  Letting $X \in \fg_r$, we have:
 \begin{align}
 \Ad_{\exp(X)}(H) = \exp(\ad_X)(H) = H + [X,H] + \ldots = H_0 + H_r - [H_0,X] + \ldots,
 \end{align}
 where the dots indicate terms of degree $> r$.  Fixing root vectors $e_\alpha \in \fg_\alpha$, we have $H_r = \sum_{\alpha \in \Delta(\fg_r)} c_\alpha e_\alpha$.  By \ref{mu2} in Lemma \ref{L:mu}, $\alpha(H_0) \neq 0$ $\forall \alpha \in \Delta^+$, so defining $X := \sum_{\alpha \in \Delta(\fg_r)} \frac{c_\alpha}{\alpha(H_0)} e_\alpha$, we have $H_r - [H_0,X] = 0$.  Redefining $\Ad_{\exp(X)}(\ff)$ as $\ff$ and  $\Ad_{\exp(X)}(H)$ as $H$, the latter has $H_+$ with leading part of degree $r+1$.  Inductively, we may normalize $H_+= \fd(H_0) = 0$, and so $H = H_0 \in \fh \cap \ff^0$.  Since $\alpha(H_0) \neq 0$ for all $\alpha \in \Delta(\fp_+)$, the $P_+$-part of the structure group has been completely reduced.\\
 
 \underline{Step 2: Observe that $\ker(\mu) \subset \ff^0$}\\
 
 Fix any $0 \neq H_0' \in \ker(\mu) \subset \fh$, so $H_0' \in \fa$.  Write $H' = H_0' + H_+' \in \ff$ with $H_+' = \fd(H_0') \in \fp_+ = \fg^1$.  By \ref{D:M2} from Definition \ref{D:alg-model}, we have:
 \begin{align}
 [H_0,H']_\ff = [H_0,H'] = [H_0,H'_0 + H_+'] = [H_0,H_+'] \in \fp_+ \cap \ff = \ff^1 = 0,
 \end{align}
 where the twistor simplification was invoked for the last equality.  Since \ref{mu2} implies $\alpha(H_0) \neq 0$ for all $\alpha \in \Delta(\fp_+)$, then necessarily $H_+' = 0$.  Thus, $\ker(\mu) \subset \ff^0$.\\
 
 \underline{Step 3: Show that $\ff = \fa$ as subspaces of $\fg$.}\\

 Recall $J_\mu$ from \eqref{E:Jmu} and the secondary grading $\sfZ_{J_\mu}$.  From \eqref{E:a0}, we have $\fg = \fa \op \fa^\perp$ where
 \begin{align}
 \begin{split}
 &\fa = \fg_- \op \fa_0, \qbox{where} \fa_0 = \ker(\mu) \op \bigoplus_{\gamma \in \Delta(\fg_{0,\leq 0})} \fg_\gamma, \\
 &\fa^\perp := \ker(\mu)^\perp \op \fg_{0,+} \op \fg_+, \qbox{where} \fg_{0,+} := \bigoplus_{\gamma \in \Delta(\fg_{0,+})} \fg_\gamma,
 \end{split}
 \end{align}
  and $\ker(\mu)^\perp$ is a 1-dimensional complement to $\ker(\mu)$ inside $\fh$.  Both $\fa$ and $\fa^\perp$ are $\fh$-invariant, so in particular they are invariant under $\ker(\mu)$.  Defining the associated deformation map $\fd : \fa \to \fa^\perp$, Lemma \ref{L:def-T-inv} implies that $H \cdot \fd = 0$, $\forall H \in \ker(\mu) \subset \ff^0$, so $\fd$ lies in the direct sum of weight spaces of $\fa^* \otimes \fa^\perp$ for weights that are {\bf multiples of $\mu$}.
 
 Note $\Delta^- \subset \Delta(\fa)$, so let $\alpha \in \Delta^+$ and examine $\fd(e_{-\alpha})$.  From \eqref{E:a0}, we have $\fg_{-\alpha} \subset \fa$, and the weights of $e_{-\alpha}^* \otimes \fa^\perp$ are of the form $\alpha + \gamma$, where $\gamma \in \Delta^+(\fa^\perp) \cup \{ 0 \}$.  These all have {\em non-negative} coefficients in the simple root basis. By \ref{mu1}, these weights cannot be multiples of $\mu$.  Hence, $\fd(e_{-\alpha}) = 0$, i.e.\ $e_{-\alpha} \in \ff$.  (This argument is very similar to the $(G_2,P_2)$ case from \S \ref{S:G2}.)
 
 For our Step 3 claim, it suffices to consider $\alpha \in \Delta^+(\fa) = \Delta^+(\fg_{0,0})$ and show that $\fd(e_\alpha) = 0$, $\forall \alpha \in \Delta^+(\fg_{0,0})$. First recall that $w = (jk) \in W^\fp(2)$ as in Lemma \ref{L:mu}.  We claim that we may assume
 \begin{align} \label{E:Zmu}
 \sfZ_j(\mu) > 0, \quad \sfZ_s(\mu) = \sfZ_s(-\lambda) < 0, \quad s \neq j,k.
 \end{align}
 Via the twistor simplification, we have either: (a) $I_\fp = \{ j \}$, hence $\sfZ_j(\mu) = \sfZ(\mu) > 0$; or (b) $I_\fp = \{ j,k \}$ with $c_{jk} = 0$, hence $0 < \sfZ(\mu) = \sfZ_j(\mu) + \sfZ_k(\mu)$, so either $\sfZ_j(\mu) > 0$ or $\sfZ_k(\mu) > 0$.  Since $c_{jk} = 0$, then swap $j,k$ if necessary to assume that $\sfZ_j(\mu) > 0$.  Since $\ell \geq 3$, \eqref{E:mu1} implies the rest of \eqref{E:Zmu}.

  Since $\fd$ has positive degree, then $\fd(e_\alpha) \in \fp_+$, so let us consider a weight $\gamma - \alpha$ for $\gamma \in \Delta(\fp_+)$ corresponding to a possible term $e_\alpha^* \otimes e_\gamma$ in $\fd$.  Using $J_\mu$, we have two cases:
  \begin{enumerate}
 \item $J_\mu \backslash \{ k \} \neq \emptyset$: Since $\sfZ_j(\alpha) = 0$, then $\sfZ_j(\gamma - \alpha) = \sfZ_j(\gamma) > 0$, while for any $i \in J_\mu \backslash \{ k \}$, we have $\sfZ_i(\gamma - \alpha) = \sfZ_i(\gamma) > 0$.  By \eqref{E:Zmu}, $\gamma - \alpha$ cannot be a multiple of $\mu$.
 \item $J_\mu\backslash \{ k \} = \emptyset$: Since $\ell \geq 3$, fix any $s \neq j,k$ and note that $c_{js},c_{ks},c_{kj} \leq 0$ (by standard properties of Cartan matrices).  By definition of $J_\mu$, we have 
 $\langle \mu, \alpha_s^\vee \rangle = 0$.  Recalling that $r_i \geq 0$ for all $i$, \eqref{E:mu1} implies:
 \begin{align}
 0 = \langle -\mu, \alpha_s^\vee \rangle = r_s - (r_j+1)c_{js} - (r_k+1)(c_{ks} - c_{kj} c_{js}) \geq r_s \geq 0.
 \end{align}
 Hence, $r_s = 0$, $c_{js} = c_{ks}=0$, i.e.\ every $s \neq j,k$ is {\em not} connected in the Dynkin diagram to either $j$ or $k$.  Since $\fg$ is simple (with $\rnk(\fg) \geq 3$), its Dynkin diagram is connected, so this is a contradiction, i.e.\ this case is vacuous.
 \end{enumerate}
 We conclude that $\fd(e_\alpha) = 0$, $\forall \alpha \in \Delta^+(\fg_{0,0})$, and hence $\fd = 0$.  Thus, $\ff = \fa$ as subspaces of $\fg$.\\
 
 \underline{Step 4: Study curvature $\kappa$}\\
 
 By \ref{D:M3} and Proposition \ref{P:algCC}, we have $\kappa \in \ker(\partial^*)^1 \subset \bigwedge^2 \fp_+ \otimes \fg$ and $\ff^0 \cdot \kappa = 0$.  Since $\ker(\mu) \subset \ff^0$, then $\kappa$ is valued in the direct sum of weight spaces of $\ker(\partial^*)^1$ for weights $\sigma = r\mu = \alpha + \beta + \gamma$ with $\alpha, \beta \in \Delta(\fp_+)$ and $\gamma \in \Delta \cup \{ 0 \}$.  For the same reasons there (regularity and the final statement in \S \ref{S:parabolic}), we again have $r \geq 1$.  Let us show that $r \leq 1$.  Write the highest weight of $\fg$ as $\lambda = \sum_i n_i \alpha_i$, where $n_i > 0$ for all $i$ since $\fg$ is simple.  Since $-\lambda$ is the lowest root of $\fg$, then $-\lambda \leq \gamma < \sigma$.  Thus, for any $i \neq j,k$, 
 \begin{align}
 -n_i = \sfZ_i(-\lambda) \leq \sfZ_i(\gamma) \leq \sfZ_i(\sigma) = r\sfZ_i(\mu) = -r n_i,
 \end{align}
 where the last equality follows from \eqref{E:mu1}. Since $n_i > 0$, then $r \leq 1$ follows.  Thus, $r=1$ and so $\kappa$ has weight $\sigma = \mu$.  The multiplicity of $\mu$ (lowest weight) is the same as that occurring in the $\fg_0$-irrep $\bbV_\mu$, i.e.\ multiplicity one, by Kostant's theorem.  Under the identification with harmonic 2-cochains, $\kappa$ must be a nonzero multiple of $\phi_0$.  Using $\Ad_{\exp(t\sfZ)}$, we may do a complex rescaling to arrange $\kappa = \phi_0$.  Thus, we have obtained the canonical curved model.
 
 Working with split-real geometries, we similarly arrive at $\kappa$ being a nonzero multiple of $\phi_0$ using almost exactly the same arguments as in the complex case.  The only part that differs concerns the use of \cite[Prop.3.1.1]{KT2017} to assert \eqref{E:TPmax} and the subsequent statement characterizing equality there.  A key ingredient for that Proposition is that {\bf $\cO = G_0 \cdot [\phi_0]$ is the unique closed $G_0$-orbit in $\bbP(\bbV_\mu)$}, and this orbit is of minimal dimension.  This is a well-known result in the complex setting, and the result remains true in the split-real setting -- see \cite[Cor.1]{Winther2023}.  All other arguments in \cite[Prop.3.1.1]{KT2017} and this section are exactly the same to arrive to $\kappa$ being a nonzero multiple of $\phi_0$.
  
 Finally, a real scaling using $\Ad_{\exp(t\sfZ)}$ normalizes $\kappa = \pm \phi_0$.  The algebraic models are $P$-equivalent if and only if there exists $g_0 \in G_0$ such that $g_0 \cdot \phi_0 = -\phi_0$.  The proof of Theorem \ref{T:main} is complete.

 \section{Examples}
 \label{S:examples}
 
 In this final section, we apply Theorem \ref{T:main} and give concrete examples of submaximally symmetric parabolic geometries, expressed as underlying geometric structures.  Implicit here are known equivalences of categories, in particular the parabolic geometry types $(G,P)$ associated to given structures.  We do not provide details here, but instead refer the reader to \cite{CS2009}.
 
 We will use the following notation.  Let $E_{ij}$ denote the standard square matrix (of size to be specified) with a 1 in the $(i,j)$-position and $0$ elsewhere.  We continue to use $\lambda$ for the highest weight of $\fg$, and $\phi_0$ for a lowest weight vector of a $\fg_0$-irreducible submodule of $H^2_+(\fg_-,\fg)$, obtained via Kostant's theorem (\S \ref{S:can-submax}).

 \subsection{Projective structures}
 
 On a manifold $M^n$, two torsion-free affine connections are equivalent if and only if they admit the same unparametrized geodesics, and an equivalence class $[\nabla]$ is called a {\sl projective structure}.  These well-known structures underlie geometries of type $(G,P) = (A_n,P_1)$, for which $\fM = (n+1)^2 - 1$, and harmonic curvature corresponds to the {\sl projective Weyl curvature}.  Here, $G_0 \cong \GL(n,\bbF)$ (for $\bbF = \bbR$ or $\bbC$) realized as matrices of the form $A = \begin{psmallmatrix} \det(A_0)^{-1} & 0\\ 0 & A_0 \end{psmallmatrix} \in \tMat_{n+1}(\bbF)$, where $A_0 \in \GL(n,\bbF)$.  In \cite{KT2017}, we found that $\fS = (n-1)^2+4$ for $n \geq 3$, realized in particular by the Egorov projective structure \cite{Egorov1951}, \cite[(5.11)]{KT2017}.  We can now assert:
 
 \begin{cor} \label{T:proj} 
 Let $n \geq 3$, and $(M^n,[\nabla])$ a submaximally symmetric projective structure with non-vanishing projective Weyl curvature at $x \in M$.  Then about $x$, $(M^n,[\nabla])$ is locally equivalent to the Egorov projective structure (in either the real or complex settings).
 \end{cor}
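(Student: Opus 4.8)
The plan is to obtain Corollary~\ref{T:proj} as a direct application of Theorem~\ref{T:main} to the parabolic type $(G,P) = (A_n,P_1)$, with $G = \SL(n+1,\bbF)$ complex or split-real simple of rank $n \geq 3$, so the rank hypothesis is met. The first step is to verify that the $G_0$-irreducibility hypothesis on the curvature module is automatic here. By Kostant's theorem the $G_0$-irreps in $H^2_+(\fg_-,\fg)$ are indexed by the length-two words $w = (jk) \in W^\fp(2)$. Since $I_\fp = \{1\}$ we are forced to take $j = 1$; as $I_\fp$ contains only this node the possibility $k \in I_\fp$ is excluded, and the remaining condition $c_{1k} < 0$ holds in the $A_n$ Dynkin diagram only for $k = 2$. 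Hence $W^\fp(2) = \{(12)\}$ and $H^2_+(\fg_-,\fg) = \bbV_\mu$ is a single $G_0$-irrep, namely the projective Weyl module. In particular every regular, normal geometry of type $(A_n,P_1)$ is automatically of type $(A_n,P_1,\bbV_\mu)$.

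Because there is a unique irrep we have $\fS = \fS_{\bbV_\mu}$, so a submaximally symmetric $(M^n,[\nabla])$ satisfies $\dim(\inf(\cG,\omega)) = \fS_{\bbV_\mu}$ and all the hypotheses of Theorem~\ref{T:main}. Applying it near a point where $\kappa_H \neq 0$, the geometry is locally homogeneous and its algebraic model is $P$-equivalent to the canonical curved model of type $(\fg,\fp,\bbV_\mu)$, unique over $\bbC$. Since the Egorov projective structure realizes $\fS$ (see \cite{KT2017}), its algebraic model must also be the canonical one; thus over $\bbC$ any submaximally symmetric projective structure is locally equivalent to the Egorov structure near a non-flat point.

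For the split-real case I would verify the sign condition of Theorem~\ref{T:main}(2), namely that some $g_0 \in G_0$ satisfies $g_0 \cdot \phi_0 = -\phi_0$. Using \eqref{E:mu1} with $\lambda = \alpha_1 + \cdots + \alpha_n$ (highest root of $A_n$), $r_1 = r_n = 1$ and $r_i = 0$ otherwise, and $c_{21} = -1$, a short computation gives $\mu = 2\alpha_1 - \alpha_3 - \alpha_4 - \cdots - \alpha_n$. The next step is to read off the character by which the diagonal torus of the $\GL(n,\bbR)$-Levi rescales the weight vector $\phi_0$, carefully incorporating the determinant twist in the embedding $A_0 \mapsto \diag(\det(A_0)^{-1}, A_0)$. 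One then exhibits an explicit real element --- I expect $g_0 = \diag(-1,1,\ldots,1,-1) \in G_0 \cap \SL(n+1,\bbR)$ --- which scales $\phi_0$ by $-1$. This confirms uniqueness in the split-real setting, so the two models $\kappa = \pm\phi_0$ coincide up to $P$-equivalence and again give the Egorov structure.

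The main obstacle is the bookkeeping in this last step: correctly translating $\mu$ into a character of the $\GL(n)$-factor with the determinant twist, and checking the relevant parity, so that the sign flip is realizable by a \emph{real} group element rather than merely a complex one. The identification of the abstract canonical model with the concrete Egorov structure is not itself proved here; it rests on \cite{KT2017} having exhibited Egorov as a submaximal realizer, with Theorem~\ref{T:main} then forcing local uniqueness.
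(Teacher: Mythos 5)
Your proposal is correct and follows essentially the same route as the paper: immediate application of Theorem \ref{T:main} over $\bbC$, then for the split-real case an explicit sign-reversing element of $G_0$, and the element you guess, $\diag(-1,1,\ldots,1,-1)$, is exactly the one the paper uses. The paper carries out the ``bookkeeping'' you defer by writing $\phi_0 = E_{12}\wedge E_{13}\otimes E_{n+1,2}$ and computing $A\cdot\phi_0 = \tfrac{a_1^2 a_{n+1}}{a_2^2 a_3}\phi_0$ directly, which confirms your expected $g_0$ works precisely because $n\geq 3$ keeps the indices $3$ and $n+1$ distinct.
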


 \begin{proof} 
 Using Theorem \ref{T:main}, we immediately conclude the result over $\bbF = \bbC$, so consider $\bbF = \bbR$.
 Using $w = (12) \in W^\fp(2)$ and $\lambda = \lambda_1 + \lambda_n$, we obtain $\phi_0 = e_{\alpha_1} \wedge e_{\alpha_1 + \alpha_2} \otimes e_{-\alpha_2 - ... - \alpha_n} = E_{12} \wedge E_{13} \otimes E_{n+1,2}$, where $E_{ij} \in \tMat_{n+1}(\bbR)$.    Letting $A = \diag(a_1,...,a_{n+1}) \in G_0$ where $a_1 = (a_2 \cdots a_{n+1})^{-1}$, we get $A \cdot \phi_0 = \frac{a_1^2 a_{n+1}}{a_2^2 a_3} \phi_0$.  Since $n \geq 3$, then setting $a_2 = ... = a_n = 1$ and $a_1 = a_{n+1} = -1$, we get $A \cdot \phi_0 = -\phi_0$.  Invoking Theorem \ref{T:main} now gives the result.
 \end{proof}
 
 \begin{remark} \label{R:R}
 Over $\bbR$, some attention should be given to the choice of Lie group $G$.  Choosing $G = A_n := \SL(n+1,\bbR)$ with $G_0$ as above, the induced $G_0$-action on $\fg_{-1}$ is $v \mapsto Bv$, where $B = R_0 \det(R_0)$, so $\det(B) = \det(R_0)^{n+1}$, which is always positive when $n$ is odd.  In these cases, one is in fact working with {\em oriented} manifolds.  In the unoriented setting, one could work with $G = \PGL(n+1,\bbR)$ (i.e.\ $\GL(n+1,\bbR)$ modulo its centre $\cZ(\GL(n+1,\bbR))$, used as in \cite[Prop.4.1.5]{CS2009}) or use $G = \widehat{SL}(n+1,\bbR) := \{ R \in \tMat_{n+1}(\bbR): \det(R) = \pm 1 \}$ when $n$ is odd.
 \end{remark}

 \subsection{2nd order ODE systems}
 \label{S:2ODEsys}

 Any system $\ddot{x}^i = F^i(t,x^j,\dot{x}^j)$, $1 \leq i \leq m$ of 2nd order ODE in $m \geq 2$ dependent variables (viewed up to point transformations) admits an equivalent description as a (regular, normal) parabolic geometry of type $(A_{m+1},P_{1,2}) = (\PGL(n+1,\bbF),P_{1,2})$.  (In \cite[\S 4.4.3]{CS2009}, these are formulated as {\sl generalized path geometries}.  When $m \geq 3$ (or $m=1$), these can all be locally realized as 2nd order ODE systems, while for $m=2$, we additionally have the constraint that $\kappa_H$ vanishes in degree $+1$.)  We have $\fM = (m+2)^2 - 1$, locally uniquely realized by the trivial ODE $\ddot{x}^1 = ... = \ddot{x}^m = 0$.  Here, 
 \begin{align}
 G_0 = \left\{ \begin{psmallmatrix} a_1 & 0 & 0\\ 0 & a_2 & 0\\ 0 & 0 & A_0 \end{psmallmatrix}: A_0 \in \GL(m,\bbF),\, a_i \in \bbF^\times \right\} \quad\mod \cZ(\GL(n+1,\bbF)),
 \end{align}
 and harmonic curvature decomposes into two components: {\sl Fels curvature} (degree +3) and {\sl Fels torsion} (degree +2).  Referring to \cite[\S 5.3 and \S 5.4]{KT2018}, we have (using $\lambda = \lambda_1 + \lambda_{m+1}$ and $E_{ij} \in \tMat_{m+2}(\bbF)$ and notation $\fS_\mu$, $\fU_\mu$ from \S \ref{S:can-submax}):
 \begin{itemize}
 \item $w = (21)$ (``Segr\'e branch'', i.e.\ vanishing Fels torsion): $\mu_1 := -w\bullet \lambda = 4\lambda_2 - 3\lambda_3 - \lambda_{m+1}$ has $\sfZ(\mu_1) = +3$, and $\fS_{\mu_1} = \fU_{\mu_1} = m^2 + 5$, realized in the Segr\'e branch by:
 \begin{align} \label{E:2ODE-Segre}
 \ddot{x}^1 = ... = \ddot{x}^{m-1} = 0, \quad \ddot{x}^m = (\dot{x}^1)^3.
 \end{align}
 \item $w = (12)$ (``projective branch'', i.e.\ vanishing Fels curvature): $\mu_2 := -w\bullet \lambda = 4\lambda_1 - \lambda_2 - \lambda_3 - \lambda_{m+1}$ has $\sfZ(\mu_2) = +2$ and $\fS_{\mu_2} = \fU_{\mu_2} = m^2 + 4$, realized in the projective branch by the geodesic equations of the Egorov projective structure:
 \begin{align} \label{E:2ODE-proj}
 \ddot{x}^i = 2 x^1 \dot{x}^1 \dot{x}^2 \dot{x}^i, \quad 1 \leq i \leq m.
 \end{align}
  Using the point transformation $(\tilde{t}, \tilde{x}^1, \tilde{x}^2, ..., \tilde{x}^m) = (x^1,t + \half (x^1)^2 x^2, x^2, ...,  x^m)$ given in \cite{BLP2021}, a simpler alternative model to \eqref{E:2ODE-proj} is
 \begin{align} \label{E:2ODE-proj2}
 \ddot{x}^1 = x^2, \quad \ddot{x}^2 = ... = \ddot{x}^m = 0.
 \end{align}
 (All ODE in the projective branch are geodesic equations for some projective structure, and Theorem \ref{T:proj} asserts the classification of submaximal symmetry models in this branch.)
 \end{itemize}

 Uniqueness of the submaximally symmetric ODE \eqref{E:2ODE-Segre} and \eqref{E:2ODE-proj2} was recently asserted in \cite[Theorems 2 \& 3]{BLP2021} without proof.  Applying our Theorem \ref{T:main}, we obtain:
 
 \begin{cor}
 Let $m \geq 2$.  Over $\bbF = \bbR$ or $\bbC$, suppose that a given 2nd order ODE system $\ddot{x}^i = F^i(t,x^j,\dot{x}^j)$, $1 \leq i \leq m$ is submaximally symmetric, i.e.\ it has point symmetry algebra of dimension $\fS = m^2+5$.  Then the system has vanishing Fels torsion everywhere, and about any point where Fels curvature is non-vanishing, the system is locally point equivalent to \eqref{E:2ODE-Segre}.
 
 Within the projective branch (i.e.\ vanishing Fels curvature), about any point where Fels torsion is non-vanishing, any submaximally symmetric system (realizing $\fS_{\mu_2} = m^2 + 4$) is locally point equivalent to \eqref{E:2ODE-proj2}.
 \end{cor}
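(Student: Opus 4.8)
The plan is to apply Theorem~\ref{T:main} to the type $(A_{m+1},P_{1,2})$ geometry, noting that $\rnk(A_{m+1}) = m+1 \geq 3$ for $m \geq 2$. First I would record the Kostant data from \cite{KT2018}: harmonic curvature splits as Fels torsion (the $\fg_0$-irrep $\bbV_{\mu_2}$, degree $+2$) plus Fels curvature ($\bbV_{\mu_1}$, degree $+3$), with $\fU_{\mu_1} = m^2+5$ and $\fU_{\mu_2} = m^2+4$. Because the extrinsic Tanaka prolongation of Definition~\ref{D:TP} is monotone under inclusion of annihilators, any $\phi = \phi_1 + \phi_2 \in \bbV_{\mu_1} \op \bbV_{\mu_2}$ with $\phi_2 \neq 0$ has $\fann(\phi) \subseteq \fann(\phi_2)$, hence $\dim(\fa^\phi) \leq \fU_{\mu_2} = m^2+4$; the maximum $\fU = m^2+5$ is therefore attained only on $\bbV_{\mu_1}$. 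Combined with Proposition~\ref{P:canonical} (applicable since $G = A_{m+1}$ avoids the excluded rank-two cases), which gives $\fS_{\mu_1} = \fU_{\mu_1}$, we obtain $\fS = \fU = m^2+5$.

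The crucial observation is that submaximal symmetry forces the Fels torsion to vanish identically. Fix $u \in \cG$ with $\kappa_H(u) \neq 0$ and decompose $\kappa_H(u) = \tau + \chi$ with $\tau \in \bbV_{\mu_2}$, $\chi \in \bbV_{\mu_1}$. Since $\omega_u$ restricts to an injection on $\inf(\cG,\omega)$ and passing to associated-graded preserves dimension, $\dim(\fs(u)) = \dim(\inf(\cG,\omega)) = m^2+5$. If $\tau \neq 0$, the same monotonicity as above yields $\fa^{\kappa_H(u)} \subseteq \fa^\tau$ and thus $\dim(\fa^{\kappa_H(u)}) \leq \fU_{\mu_2} = m^2+4$, contradicting $\dim(\fs(u)) \leq \dim(\fa^{\kappa_H(u)})$ from \eqref{E:s-Kh}. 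Hence $\tau = 0$ at every point with $\kappa_H \neq 0$, and trivially elsewhere, so Fels torsion vanishes everywhere and $\im(\kappa_H) \subset \bbV_{\mu_1}$.

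With the torsion removed, I would invoke Theorem~\ref{T:main} for type $(A_{m+1},P_{1,2},\bbV_{\mu_1})$: about any $u$ with $\kappa_H(u) \neq 0$ (equivalently, nonvanishing Fels curvature) the geometry is locally homogeneous and its algebraic model is the canonical curved model of type $(\fg,\fp,\bbV_{\mu_1})$, which is unique over $\bbC$. As \eqref{E:2ODE-Segre} realizes $\fS_{\mu_1}$ within the Segr\'e branch, it must coincide with this model, giving local point equivalence to \eqref{E:2ODE-Segre}. For $\bbF = \bbR$ I would discharge the uniqueness criterion of Theorem~\ref{T:main}(2) by an explicit computation in the spirit of Corollary~\ref{T:proj}: writing $\phi_0$ for $w = (21)$ and $\lambda = \lambda_1 + \lambda_{m+1}$ in terms of matrix units $E_{ij} \in \tMat_{m+2}(\bbR)$, and exhibiting a diagonal $g_0 \in G_0$ with $g_0 \cdot \phi_0 = -\phi_0$ (available since $m \geq 2$ leaves enough free diagonal entries).

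The projective-branch statement is then immediate: vanishing Fels curvature means $\im(\kappa_H) \subset \bbV_{\mu_2}$, where the submaximal dimension is $\fS_{\mu_2} = m^2+4$, and under the correspondence-space descent this is exactly the submaximal classification for projective structures on an $(m+1)$-manifold. I would therefore invoke Corollary~\ref{T:proj} (equivalently Theorem~\ref{T:main} with $\bbV = \bbV_{\mu_2}$) to conclude that about any point of nonvanishing Fels torsion the system is locally point equivalent to the Egorov geodesic equations \eqref{E:2ODE-proj}, hence to \eqref{E:2ODE-proj2}; the same diagonal sign-reversal computation settles the real case. The main obstacle is the torsion-vanishing step of the second paragraph; once the strict gap $\fU_{\mu_2} < \fU_{\mu_1}$ and prolongation monotonicity are in hand, everything else reduces to a direct application of Theorem~\ref{T:main} together with a routine real-form sign check.
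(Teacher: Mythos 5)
Your proposal is correct and follows essentially the same route as the paper: the torsion-vanishing step via the inclusion $\fa^{\phi}\subseteq\fa^{\phi_2}$ together with \eqref{E:s-Kh} and the strict gap $\fU_{\mu_2}=m^2+4<m^2+5$, the application of Theorem~\ref{T:main} to $\bbV_{\mu_1}$ with the same diagonal element $g_0\in G_0$ realizing $g_0\cdot\phi_0=-\phi_0$ for the real case, and the reduction of the projective branch to Corollary~\ref{T:proj} via the correspondence space. No substantive differences.
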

 
 \begin{proof}
 Note that we have $\fU = \max\{ \fU_{\mu_1}, \fU_{\mu_2} \} = \fU_{\mu_1} = m^2 + 5$ and $\fS = \fS_{\mu_1} = m^2 + 5$.  Since $\fS = \fU$, then local homogeneity follows from Lemma \ref{L:hom}. Write $\phi = \phi_1 + \phi_2$ for $\phi_1 \in \bbV_{\mu_1}$ and $\phi_2 \in \bbV_{\mu_2}$, where $\bbV_{\mu_i}$ are the $\fg_0$-irreducible submodules of $H^2_+(\fg_-,\fg)$ corresponding to Fels curvature and Fels torsion respectively.  We have $\fa^\phi \subset \fa^{\phi_2}$, which has maximal dimension $\fU_{\mu_2} = m^2+4$ when $\phi_2 \neq 0$.  By \eqref{E:s-Kh} and the symmetry dimension being $m^2+5$, the Fels torsion must vanish everywhere.
 
  We now invoke Theorem \ref{T:main}.  Using $w = (21) \in W^\fp(2)$, we find that $\phi_0 = e_{\alpha_2} \wedge e_{\alpha_1 + \alpha_2} \otimes e_{-\alpha_3 - ... - \alpha_{m+1}} = E_{23} \wedge E_{13} \otimes E_{m+2,3}$.  For $A = \diag(a_1,...,a_{m+2}) \in G_0$, we get $A \cdot \phi_0 = \frac{a_1a_2a_{m+2}}{a_3^3} \phi_0$.  Setting $a_1 = ... = a_{m+1} = 1$ and $a_{m+2} = -1$, we get $A \cdot \phi_0 = -\phi_0$, so uniqueness now follows from Theorem \ref{T:main}. Our final statement reformulates Corollary \ref{T:proj} via the correspondence space construction (\S \ref{S:cor-twist}).
 \end{proof}

 We remark that in \cite{KT2017}, we used $G = \SL(m+2,\bbR)$ instead of $G = \PGL(m+2,\bbR)$.  This small change does not affect $\fS$ and $\fU$, but the notion of point equivalence is slightly restricted with the former, as we now explain.  Consider $A = \begin{psmallmatrix} a_1 & 0 & 0\\ 0 & a_2 & 0\\ 0 & 0 & A_0 \end{psmallmatrix} \in G_0$ with $a_1 a_2 \det(A_0) = 1$.  The ODE structure is modelled on $\fg_{-1}$, which is split into the direct sum of $\langle E_{21} \rangle $ (corresponding to the line field spanned by the total derivative $D_t := \partial_t + \dot{x}^i \partial_{x^i} + F^i \partial_{\dot{x}^i}$) and $\langle E_{32},..., E_{m+2,2} \rangle$ (corresponding to $\langle \partial_{\dot{x}^i} \rangle$).  On $\fg_{-1}$, $A$ induces:
 \begin{align}
 (\ell,v) \mapsto \left( c \ell,  B_0 v \right), \quad c = \tfrac{a_2}{a_1}, \quad B_0 = A_0 a_2^{-1}.
 \end{align}
 But then $\det(B_0) = \det(A_0) a_2^{-m} = \frac{1}{a_1 a_2^{m+1}} = \frac{c}{a_2^{m+2}}$.  When $m$ is even, the signs of $c$ and $\det(B_0)$ are aligned, and the point transformation $(t,x^i) \mapsto (-t,x^i)$ would not be an admissible equivalence.
 
 If we consider $G = \SL(m+2,\bbR)$, then for $m \geq 3$, setting $a_i = 1$ for $i \neq 3,4$ except $a_3 = a_4 = -1$ yields $A \cdot \phi_0 = -\phi_0$.  When $m=2$, we have $A \cdot \phi_0 = \frac{a_1 a_2 a_4}{a_3^3} \phi_0 = \frac{1}{a_3^4} \phi_0$, and no $A \in G_0$ exists with $A \cdot \phi_0 = -\phi_0$.  In this case, $\ddot{x} = 0$, $\ddot{y} = \pm \dot{x}^3$ would be inequivalent submaximally symmetric models.

 \subsection{Conformal structures}  
 
 Given a smooth manifold $M^n$ with $n \geq 3$ and a metric $g$ of signature $(p,q)$, we let $[g] := \{ \lambda g \,|\, \lambda : M \to \bbR^+ \mbox{ smooth} \}$, and refer to $(M^n,[g])$ as a {\sl conformal structure}.  This admits an equivalent description as a parabolic geometry of type $(G,P) = (\SO(p+1,q+1),P_1)$, where $P_1$ is the stabilizer of a null line in $\bbR^{p+1,q+1}$, so $\fM = \binom{n+2}{2}$.  Restrict now to $n \geq 4$.  See \cite{DT2014} for $\fS$ in the Riemannian / Lorentzian cases, which are exceptional.  In non-Riemannian / non-Lorentzian signatures, \cite[\S5.1]{KT2017} indicates $\fS = \binom{n-1}{2} + 6$, realized by $[\bg]$, with $\bg$ given by the direct product of a flat Euclidean metric of signature $(p-2,q-2)$ and the $(2,2)$-metric
 \begin{align}
 g_{{\rm pp}}^{(2,2)} = y^2 dw^2 + dw dx + dy dz.
 \end{align}
 Restrict now to the split-real form, so $|p-q| \leq 1$.  We view $\fg \subset \tMat_{n+2}(\bbR)$ as matrices that are skew with respect to the anti-diagonal, and let $G_0$ consist of block diagonal matrices with blocks $(\lambda, C, \lambda^{-1})$ with $\lambda > 0$ and $C \in \SO(p,q)$, and so the $G_0$-action on $\fg_{-1}$ is given by $x \mapsto \lambda^{-1} C x$.  In particular, any scalar product on $\fg_{-1}$ is only positively rescaled, so any conformal structure $[g]$ and its ``negative'' $[-g]$ are inequivalent. Together with Theorem \ref{T:main}, we deduce:

 \begin{cor}
 Let $n = p+q \geq 4$ and $|p-q| \leq 1$.  Suppose that a conformal structure of signature $(p,q)$ is submaximally symmetric, i.e.\ its conformal symmetry algebra has dimension $\fS = \binom{n-1}{2} + 6$.  Then about any point where the Weyl curvature is non-vanishing, the structure is locally conformally equivalent to one of the two models $[\bg]$ or $[-\bg]$ described above.
 \end{cor}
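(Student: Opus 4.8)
The plan is to obtain the corollary as a direct application of Theorem~\ref{T:main}(2), with the only genuine work being to decide whether a lowest weight vector of the Weyl module can be negated by the Levi factor. First I would verify the hypotheses for $(G,P) = (\SO(p+1,q+1),P_1)$. When $|p-q|\le 1$ the group $\SO(p+1,q+1)$ is the split real form of $D_{(n+2)/2}$ (if $p=q$) or $B_{(n+1)/2}$ (if $|p-q|=1$), and $\rnk(G) = \lfloor\tfrac{n+2}{2}\rfloor \ge 3$ precisely because $n\ge 4$. The harmonic curvature of a conformal structure is the Weyl curvature, and by Kostant's theorem it spans the single $G_0$-irrep $\bbV = H^2_+(\fg_-,\fg)$ arising from $w=(12)\in W^\fp(2)$ with $\lambda$ the highest root (for $n\ge 5$; the case $n=4$, i.e.\ signature $(2,2)$, is special because the Weyl module then splits into self-dual and anti-self-dual summands, and requires a separate but analogous treatment). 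Since $\fS = \binom{n-1}{2}+6 = \fS_\bbV$ by \cite[\S5.1]{KT2017}, Theorem~\ref{T:main}(2) applies: such a structure is locally homogeneous about any point of nonvanishing Weyl curvature and is encoded by an algebraic model with $\kappa=\phi_0$ or $\kappa=-\phi_0$, these being $P$-equivalent if and only if some $g_0\in G_0$ satisfies $g_0\cdot\phi_0=-\phi_0$.

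It therefore remains to decide this last algebraic question, which I expect to be the crux. Following the computational pattern of Corollary~\ref{T:proj} and the second-order-ODE corollary, I would realize $\fg\subset\tMat_{n+2}(\bbR)$ as matrices skew about the anti-diagonal, take $G_0$ to consist of the blocks $\diag(\lambda,C,\lambda^{-1})$ with $\lambda>0$ and $C\in\SO(p,q)$, and write $\phi_0$ explicitly in matrix units. The decisive fact is that the lowest weight of the Weyl module has \emph{even} coefficients in the standard $\epsilon$-basis of the $\SO(p,q)$-torus: it is of the form $-2\epsilon_i-2\epsilon_j$, reflecting that the Weyl tensor is a degree-four ($[2,2]$-symmetry) tensor. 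Consequently a maximal split torus element $t=(t_1,\dots)$ acts on $\phi_0$ by the even monomial $(t_it_j)^{-2}>0$, while the conformal scaling contributes the strictly positive factor $\lambda^{\sfZ(\mu)}>0$. The stabilizer of the line $[\phi_0]$ in $G_0$ is a standard parabolic subgroup whose characters on this line are products of such torus weights (positive) and of trivial unipotent contributions, so the scalar by which any such $g_0$ acts on $\phi_0$ is always positive; hence $-\phi_0\notin G_0\cdot\phi_0$.

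The main obstacle is exactly to make the heuristic recorded before the corollary precise at the level of $\phi_0$ rather than of the metric. Conceptually, because $G_0$ acts on $\fg_{-1}$ by $x\mapsto\lambda^{-1}Cx$ with $\lambda>0$ and $C\in\SO(p,q)$, any scalar product on $\fg_{-1}$ is only positively rescaled, so the metric can never be sent to its negative by an element of $G_0$; a would-be $g_0$ with $g_0\cdot\phi_0=-\phi_0$ would force a negative scalar on a line on which both the torus and the centre of $G_0$ act positively, which is impossible. The even-weight observation is what I expect to turn this intuition into a rigorous statement, and verifying that no component of the (possibly disconnected) group $\SO(p,q)$ evades it is the one point demanding care.

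Finally, having ruled out uniqueness, Theorem~\ref{T:main}(2) yields exactly two inequivalent algebraic models, $\kappa=+\phi_0$ and $\kappa=-\phi_0$. To identify their geometric realizations I would note that replacing a metric $g$ by $-g$ leaves the Levi--Civita connection and the $(1,3)$-Weyl tensor unchanged but negates the $(0,4)$-Weyl tensor; thus $[-\bg]$ realizes the $\kappa=-\phi_0$ model exactly as $[\bg]$ realizes $\kappa=+\phi_0$. Therefore every submaximally symmetric conformal structure of signature $(p,q)$ with $|p-q|\le 1$ is, near a point of nonvanishing Weyl curvature, locally conformally equivalent to $[\bg]$ or $[-\bg]$, which completes the proof.
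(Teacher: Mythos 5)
Your proposal reaches the right conclusion but takes a genuinely different route to the one non-trivial step. The paper's argument is soft and geometric: with $G_0$ realized as blocks $(\lambda,C,\lambda^{-1})$, $\lambda>0$, $C\in\SO(p,q)$, any scalar product on $\fg_{-1}$ is only \emph{positively} rescaled, so $[\bg]$ and $[-\bg]$ are inequivalent as underlying structures; since both are submaximally symmetric, they must realize the (at most) two models permitted by Theorem~\ref{T:main}(2), which are therefore exactly two, and the corollary follows without ever computing the weight of $\phi_0$ or deciding which sign of $\kappa$ goes with which metric. You instead verify the criterion of Theorem~\ref{T:main}(2) directly: the lowest weight of the Weyl module is $2\epsilon_1-2\epsilon_2-2\epsilon_3$ (conformal weight $2$ together with $-2\epsilon_i-2\epsilon_j$ on the $\SO(p,q)$-torus), all coefficients even, so the character by which $\Stab_{G_0}([\phi_0])$ acts on the lowest weight line is positive on the full maximal split torus and on connected/unipotent pieces, whence $-\phi_0\notin G_0\cdot\phi_0$. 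This is a correct and more self-contained Lie-theoretic verification, in the same computational style as Corollary~\ref{T:proj}; the disconnectedness worry you flag is closed by noting that the maximal split torus of $\SO(p,q)$ already surjects onto $\pi_0(\SO(p,q))$, so evenness of the weight on the torus suffices. What the paper's route buys is that it sidesteps your weakest step, the identification of $\kappa=-\phi_0$ with $[-\bg]$: the harmonic curvature is essentially the $(1,3)$-Weyl tensor read in a frame adapted to the conformal class, and the adapted frames for $[\bg]$ and $[-\bg]$ differ by an element outside $G_0$, so ``negating the $(0,4)$-Weyl tensor'' does not immediately translate into negating $\phi_0$ (though the corollary as stated does not require knowing which model is which, only that the two structures realize the two distinct models). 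Finally, the $n=4$ reducibility of the Weyl module, which you flag and defer, is a genuine extra step --- Theorem~\ref{T:main} applies per $G_0$-irrep, so one must first reduce to a single (self-dual or anti-self-dual) component as in the 2nd order ODE corollary --- but the paper's own one-line deduction leaves this equally implicit.
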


 The split-signature assumption $|p-q| \leq 1$ may likely be relaxed so that the same conclusion would hold in general non-Riemannian / non-Lorentzian signatures, but this would require a more careful investigation into related real forms, which is beyond our scope here.  For the more subtle conformal Riemannian and Lorentzian cases, finding the complete local classification of submaximally symmetric models is an open problem.  (See \cite{DT2014} for known models.)

 \subsection{Parabolic contact structures}
  
 Generalizing \S \ref{S:G2}, {\sl parabolic contact structures of type $(G,P)$} (or ``{\sl $G$-contact structures}'') are underlying structures for (regular, normal) geometries of types:
 \begin{align}
 \begin{split}
 &(A_\ell,P_{1,\ell}),\,\, \ell \geq 2,\quad (B_\ell,P_2),\,\, \ell \geq 3,\quad (C_\ell,P_1),\,\, \ell \geq 2,\quad (D_\ell, P_2),\,\, \ell \geq 4,\\ 
 &(E_6,P_2),\quad (E_7,P_1),\quad (E_8,P_8),\quad (F_4,P_1), \quad (G_2,P_2).
 \end{split}
 \end{align}  
 As shown in \cite{The2018}, these structures all admit descriptions (possibly passing to a correspondence space) in terms of differential equations.  The cases $(A_2, P_{1,2})$ and $(C_2,P_1)$ are classical, and correspond to scalar 2nd order ODE (up to point transformations) and scalar 3rd order ODE (up to contact transformations).  These are exceptions: they admit non-unique submaximally symmetric structures with $\fS = 3$ and $\fS = 5$ symmetries respectively.  For all other cases, explicit submaximally symmetric structures (with respect to a given $G_0$-irrep $\bbV$ of $H^2_+(\fg_-,\fg)$) were given in \cite[\S 4.2]{The2018}.  Over $\bbC$, these are locally unique by Theorem \ref{T:main}.

 \section*{Acknowledgements}

 We thank Boris Kruglikov and Henrik Winther for helpful discussions.  The research leading to these results has received funding from the Norwegian Financial Mechanism 2014-2021 (project registration number 2019/34/H/ST1/00636), the Troms\o{} Research Foundation (project ``Pure Mathematics in Norway''), the UiT Aurora project MASCOT, and this article/publication is based upon work from COST Action CaLISTA CA21109 supported by COST (European Cooperation in Science and Technology), \href{https://www.cost.eu}{https://www.cost.eu}.


 \end{document}